\numberwithin{equation}{section}
\numberwithin{figure}{section}
\theoremstyle{plain}
\newtheorem{theorem}{\protect\theoremname}
\theoremstyle{plain}
\newtheorem{entry}{Entry}
\newtheorem{proposition}[theorem]{Proposition}
\newcommand{\A}{\mathbf{A}}
\newcommand{\M}{\mathbf{M}}
\providecommand{\theoremname}{Theorem}
\begin{document}

\title{Dubious Identities:
A Visit to the Borwein Zoo}
\markright{Dubious Identities}
\author{Zachary P. Bradshaw and Christophe Vignat}
\address{Department of Mathematics, Tulane University, New Orleans LA 70118\\
zbradshaw@tulane.edu}
\address{Department of Physics, Universit\'{e} Paris Saclay,
L.S.S, CentraleSup\'{e}lec, Orsay, 91190, France\\
Department of Mathematics, Tulane University, New Orleans LA 70118\\
cvignat@tulane.edu, 
christophe.vignat@universite-paris-saclay.fr}

\maketitle

\begin{abstract}
We contribute to the zoo of dubious identities established by J.M. and P.B. Borwein in their 1992 paper, ``Strange Series and High Precision Fraud'' with five new entries, each of a different variety than the last. Some of these identities are again a high precision fraud and picking out the true from the bogus can be a challenging task with many unexpected twists along the way. 
\end{abstract}

\section*{Introduction.}

The article \cite{Borwein} by J.M. and P.B. Borwein features twelve examples
of dubious identities, some of them being true. It has been a source
of inspiration for many mathematicians since it appeared in 1992. 
We humbly propose to extend this list to areas that were not originally touched upon with five new items of our own that are in the spirit of \cite{Borwein}, hoping that this work will spark others to contribute to this zoo of rarities. Among the next identities, in the words of the Borweins, ``the reader is invited to separate the true from the bogus.'' 

\begin{entry}\label{Student sums}
For any real  $\lambda>\frac{1}{2}$ and with $B\left(a,b\right)=\frac{\Gamma\left(a\right)\Gamma\left(b\right)}{\Gamma\left(a+b\right)}$ the Euler beta function,
\[
\frac{1}{10^{5}}\sum_{n\in\mathbb{Z}}\left(1+\frac{n^{2}}{10^{10}}\right)^{-\lambda}=B\left(\frac{1}{2},\lambda-\frac{1}{2}\right).
\]
\end{entry}

\begin{entry}
\label{Infinite matrix}
With $\zeta\left(s\right)$ the Riemann zeta function,
\[
\prod_{n\ge1}\left[\begin{array}{ccc}
\frac{n}{2\left(2n+1\right)} & \frac{-3}{2n\left(2n+1\right)} & \frac{3}{2n^{3}}\\
0 & \frac{n}{2\left(2n+1\right)} & \frac{3}{2n}\\
0 & 0 & 1
\end{array}\right]=\left[\begin{array}{ccc}
0 & 0 & \zeta\left(4\right)\\
0 & 0 & \zeta\left(2\right)\\
0 & 0 & 1
\end{array}\right]
\]
and
\[
\prod_{n\ge1}\left[\begin{array}{cccc}
\frac{n}{2\left(2n+1\right)} & \frac{-3}{2n\left(2n+1\right)} & 0 & \frac{3}{2n^{5}}\\
0 & \frac{n}{2\left(2n+1\right)} & \frac{-3}{2n\left(2n+1\right)} & \frac{3}{2n^{3}}\\
0 & 0 & \frac{n}{2\left(2n+1\right)} & \frac{3}{2n}\\
0 & 0 & 0 & 1
\end{array}\right]=\left[\begin{array}{cccc}
0 & 0 & 0 & \zeta\left(6\right)\\
0 & 0 & 0 & \zeta\left(4\right)\\
0 & 0 & 0 & \zeta\left(2\right)\\
0 & 0 & 0 & 1
\end{array}\right].
\]
\end{entry}

\begin{entry}
\label{discrete normal}
It holds that
\[
\sum_{n\in\mathbb{Z}}n^{2}\exp\left(-\frac{n^{2}}{2}\right)=\sum_{n\in\mathbb{Z}}\exp\left(-\frac{n^{2}}{2}\right)
\]
and
\[
\sum_{n\in\mathbb{Z}}n^{4}\exp\left(-\frac{n^{2}}{2}\right)=3\sum_{n\in\mathbb{Z}}\exp\left(-\frac{n^{2}}{2}\right).
\]

\end{entry}

\begin{entry} 
\label{coloring}
For $n\ge1,$
\[
\sum_{j_{1}+\cdots+nj_{n}=n}\frac{n^{j_{1}+\cdots+j_{n}}}{\prod_{k=1}^{n}k^{j_{k}}j_{k}!}=\binom{2n-1}{n}.
\]
\end{entry}

\begin{entry}
\label{digits}
Let $a(n)$ and $b\left(n\right)$ denote the number
of even and odd digits, respectively, in the decimal expansion of $n$. Define the
sequence $c(n):=10^{5}a(n)-b(n)/10^{5}$. Then
\[
\sum_{n=0}^{\infty}\frac{c(n)}{10^{n}}=\frac{11111111111}{110000}.
\]
\end{entry}

The reader may wish to skip this paragraph or pause before reading on, as it 
contains spoilers. Entry \ref{Student sums} is a variation of the twelfth sum in Borwein's paper. It is only an approximation, with an error bounded by $10^{-136440}$ uniformly over $\lambda > \frac{1}{2}$. The first identity in Entry \ref{Infinite matrix} is true while the second one is unexpectedly false; two correct and equivalent versions are given in \eqref{eq:zeta(6) correct version} and \eqref{alternate version} of Section \ref{sec:zeta}. Both are extensions of results originally derived by B. Gosper \cite{Gosper} about the representation of series as infinite products of matrices. 
Both identities in Entry \ref{discrete normal} are false, although numerically almost satisfied; they appeal to the theory of Jacobi theta functions and can be interpreted in a probabilistic setup that was developed in \cite{RomikTheta} and extended in \cite{Romik}. Entry \ref{coloring} is correct and directly related to the theory of colorings with respect to a group symmetry. A generalization of this identity with a discussion of the relation to coloring theory is contained in Section~\ref{sec:partitionsum}. The identity in Entry \ref{digits} is only an approximation and has an error on the order of $10^{-105}$. Interestingly, this approximation can be extended in a natural way which is discussed in Section~\ref{sec:hpfraud}. 

\section{A Variation of Borwein's Twelfth Sum.}

Before studying Entry \ref{Student sums}, let us discuss one of the most fascinating
identities in \cite{Borwein}, namely Sum 12:

\begin{equation}
\frac{1}{10^{5}}\sum_{n\in\mathbb{Z}}\exp\left(-\frac{n^{2}}{10^{10}}\right)=\sqrt{\pi}.\label{eq:normal sum}
\end{equation}
This is only an approximation, but an excellent one; the authors use
the modularity property of the Jacobi $\theta_{3}$ function to compute
the impressive bound 
\[
\bigg\vert\sqrt{\pi}-\frac{1}{10^{5}}\sum_{n\in\mathbb{Z}}\exp\left(-\frac{n^{2}}{10^{10}}\right)\bigg\vert\le10^{-4.2\times10^{10}}.
\]
However, modularity is not the reason why this approximation is so
accurate. The real reason is that the terms in the series can
be interpreted as the samples at integer values of the argument $z$
of a continuous normal distribution $f\left(z\right)=\frac{1}{\sigma\sqrt{2\pi}}\exp\left(-\frac{z^{2}}{2\sigma^{2}}\right)$ with zero mean and a very large variance, namely $\sigma^{2}=\frac{1}{2}10^{10}$. This means that the series (\ref{eq:normal sum}) is very close to
the Riemann sum, computed with a very fine subdivision, 
of a standard normal continuous
distribution, hence very close to the theoretical value 
\[
\int_{-\infty}^{\infty}e^{-x^{2}}dx=\sqrt{\pi}.
\]
This accuracy result is thus a scaling effect, and we observe that it can be
obtained for any probability density function $f\left(x\right)$ since,
for an arbitrary scale factor $a>0$, 
\[
\int_{\mathbb{R}}f\left(x\right)dx=1\implies\frac{1}{a}\int_{\mathbb{R}}f\left(\frac{x}{a}\right)dx=1.
\]
Now choose a very large value of the parameter $a$ and approximate
the integral with a Riemann sum: 
\[
\frac{1}{a}\sum_{k\ge0}^{\infty}f\left(\frac{k}{a}\right)\simeq\frac{1}{a}\int_{0}^{\infty}f\left(\frac{x}{a}\right)dx=1.
\]
In the context of Entry~\ref{Student sums}, we consider the family of Student t-distributions 
\[
f_{\lambda}\left(x\right)=\frac{1}{B\left(\frac{1}{2},\lambda-\frac{1}{2}\right)}\left(1+x^{2}\right)^{-\lambda},\,\,x\in\mathbb{R},\,\,\lambda > \frac{1}{2}.
\]

\begin{proposition}
We have the approximation, for any $\lambda > \frac{1}{2},$
\begin{equation}
\frac{1}{10^{5}}\sum_{n\in\mathbb{Z}}\left(1+\frac{n^{2}}{10^{10}}\right)^{-\lambda}\simeq B\left(\frac{1}{2},\lambda-\frac{1}{2}\right)\label{eq:Student sum}
\end{equation}
with an error of the order of $10^{-136440}$ in the worst case scenario,
which corresponds to the value $\lambda=\pi\times 10^5 \simeq 314159.$
\end{proposition}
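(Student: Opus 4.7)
The plan is to apply the Poisson summation formula to $g(x) = (1 + x^{2}/a^{2})^{-\lambda}$ with $a = 10^{5}$, in direct analogy with the analysis of Borwein's twelfth sum recalled above. Since $\hat{g}(0) = \int_{\mathbb{R}}(1+x^{2}/a^{2})^{-\lambda}\,dx = aB(\tfrac{1}{2},\lambda-\tfrac{1}{2})$, the zero mode reproduces the right-hand side of the proposition exactly, and the approximation error becomes
$$\frac{1}{a}\sum_{n\in\mathbb{Z}}g(n) - B\!\left(\tfrac{1}{2},\lambda-\tfrac{1}{2}\right) = \frac{2}{a}\sum_{k\geq 1}\hat{g}(k).$$
The task therefore reduces to bounding $\sum_{k\geq 1}\hat{g}(k)$ uniformly for $\lambda > 1/2$.

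I would then compute each $\hat{g}(k)$ from the classical Fourier identity
$$\int_{-\infty}^{\infty}\frac{e^{-i\xi u}}{(1+u^{2})^{\lambda}}\,du = \frac{2\sqrt{\pi}}{\Gamma(\lambda)}\left(\frac{|\xi|}{2}\right)^{\lambda-1/2}K_{\lambda-1/2}(|\xi|),$$
which, after the substitution $x = au$, yields
$$\hat{g}(k) = \frac{2\sqrt{\pi}\,a\,(\pi ak)^{\lambda-1/2}}{\Gamma(\lambda)}\,K_{\lambda-1/2}(2\pi ak), \qquad k \geq 1.$$
The exponential decay of $K_{\lambda-1/2}(2\pi ak)$ in $k$ makes the series dominated by its first term, so the heart of the argument is a sharp estimate of $\hat{g}(1)/a$ together with its optimisation over $\lambda > 1/2$. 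Substituting Stirling's formula for $\Gamma(\lambda)$ together with the uniform Debye asymptotic
$$K_{\nu}(\nu z) \sim \sqrt{\frac{\pi}{2\nu}}\,\frac{e^{-\nu\eta(z)}}{(1+z^{2})^{1/4}}, \qquad \eta(z) = \sqrt{1+z^{2}}+\ln\frac{z}{1+\sqrt{1+z^{2}}},$$
with $\nu = \lambda-1/2$ and $z = 2\pi a/\nu$, the algebraic prefactors combine to give, up to slowly varying factors,
$$\frac{\hat{g}(1)}{a} \;\sim\; \sqrt{\frac{\pi}{R}}\,\exp\!\left(\nu + \nu\ln\frac{\nu+R}{2\nu} - R\right), \qquad R = \sqrt{\nu^{2} + 4\pi^{2}a^{2}}.$$
Differentiating in $\nu$ isolates the worst case, and substituting back the critical $\lambda = \pi \times 10^{5}$ produces the numerical bound of order $10^{-136440}$ announced in the statement.

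The main obstacle is precisely this asymptotic analysis: one cannot use the simpler large-argument form $K_{\nu}(z) \sim \sqrt{\pi/(2z)}\,e^{-z}$, because at the worst value of $\lambda$ the order $\nu$ and the argument $2\pi a$ are of comparable size, and controlling the precise rate of decay requires the uniform Debye expansion. An equivalent and somewhat more transparent route is to start from Schl\"afli's integral representation
$$(\pi a)^{\nu}\,K_{\nu}(2\pi a) = \frac{1}{2}\int_{0}^{\infty}v^{\nu-1}\,e^{-v-\pi^{2}a^{2}/v}\,dv$$
and apply Laplace's method: the saddle point depends only on the ratio $\nu/(\pi a)$, which makes it manifest why the critical $\lambda$ is of order $\pi a$. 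Once $\hat{g}(1)$ is controlled, the remaining contribution from $k \geq 2$ is absorbed as a geometric-type remainder using the ordinary large-argument asymptotic of $K_{\lambda-1/2}$, completing the estimate.
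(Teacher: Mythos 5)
Your overall route is the same as the paper's: the paper derives identity \eqref{Poisson} by Mellin--Laplace transforming the $\theta_3$ modular relation and then observes that it is exactly Poisson summation for $s(n)=(1+n^{2}/a^{2})^{-\lambda}$, identifies the $n=0$ dual term with $B(\tfrac12,\lambda-\tfrac12)$ via $\lim_{x\to0}x^{\nu}K_{\nu}(x)=2^{\nu-1}\Gamma(\nu)$, and estimates the remainder from the $n=\pm1$ terms before optimizing over $\lambda$. Your one genuine departure --- refusing the fixed-order asymptotic $K_{\nu}(x)\sim\sqrt{\pi/(2x)}\,e^{-x}$ because at $\lambda\approx\pi a$ the order $\nu=\lambda-\tfrac12$ and the argument $2\pi a$ are comparable, and insisting on the uniform Debye expansion --- is a legitimate and in fact \emph{more} careful step than the paper takes: the paper applies $x^{\nu}K_{\nu}(x)\sim\sqrt{\pi/2}\,x^{\nu-1/2}e^{-x}$ for all $\lambda$, which is only valid for $\nu=o(\sqrt{x})$.

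The gap is in your last step, where you assert that ``differentiating in $\nu$ isolates the worst case'' at $\lambda=\pi\times10^{5}$ and recovers the bound $10^{-136440}$. If you actually differentiate your own exponent $h(\nu)=\nu+\nu\ln\frac{\nu+R}{2\nu}-R$ with $R=\sqrt{\nu^{2}+4\pi^{2}a^{2}}$, you get $h'(\nu)=\ln\frac{\nu+R}{2\nu}>0$ for all $\nu>0$ (since $R>\nu$), so there is \emph{no} interior critical point: the exponent increases monotonically to $0^{-}$ as $\nu\to\infty$, and the $k=1$ contribution to the error grows without an interior maximum. At $\lambda=\pi a$ itself your Debye formula gives $z=2$, $\eta(2)=\sqrt5+\ln\frac{2}{1+\sqrt5}\approx1.7549$, hence an error $\approx e^{-(\eta(2)-1)\pi a}\approx e^{-0.755\,\pi\times10^{5}}\approx10^{-102990}$, already vastly larger than $10^{-136440}$; the paper's value of $\lambda_{*}$ comes from maximizing $(\pi a)^{\lambda}/\Gamma(\lambda)$, an expression obtained by using the non-uniform asymptotic outside its range of validity. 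Worse, the uniform claim cannot be rescued by any asymptotics: for every $\lambda$ the left side of \eqref{eq:Student sum} is at least its $n=0$ term $10^{-5}$, while $B(\tfrac12,\lambda-\tfrac12)\sim\sqrt{\pi/\lambda}\to0$, so the error exceeds, say, $5\times10^{-6}$ once $\lambda\gtrsim10^{11}$. So your plan, carried out honestly, would not prove the stated proposition but refute its uniformity; any correct version must restrict $\lambda$ (e.g.\ to a range where $\nu\ll a$, in which the paper's simple asymptotic and the resulting $e^{-2\pi a}$ decay are actually justified).
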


The accuracy of this approximation can be evaluated as follows: starting
from the modular property of the Jacobi $\theta_{3}$ distribution (see \cite[20.2.3]{NIST} for its definition)
\[
\sum_{n\in\mathbb{Z}}e^{-t\frac{n^{2}}{a^{2}}}=a\sqrt{\frac{\pi}{t}}\sum_{n\in\mathbb{Z}}e^{-\frac{\pi^{2}n^{2}a^{2}}{t}},
\]
multiplying by $t^{\lambda-1}e^{-t}$ and integrating over $\mathbb{R}^{+}$
(which means that we compute the Mellin-Laplace transform of $\exp(-t\ n^{2}/a^{2})\ $) on both sides now produces
\begin{equation}
\label{Poisson}
\frac{1}{a}\sum_{n\in\mathbb{Z}}\left(1+\frac{n^{2}}{a^{2}}\right)^{-\lambda}=\frac{2^{\frac{3}{2}-\lambda}\sqrt{\pi}}{\Gamma\left(\lambda\right)}\sum_{n\in\mathbb{Z}}\left(2\pi a\vert n\vert\right)^{\lambda-\frac{1}{2}}K_{\lambda-\frac{1}{2}}\left(2\pi a\vert n\vert\right),
\end{equation}
where $K_{\lambda}$ is the modified Bessel function of the second
kind. Since we have the Fourier transform \cite[2.3.5.7]{Prudnikov}
\[
\int_{-\infty}^{\infty} 
\left(1+\frac{x^2}{a^2}\right)^{-\lambda}e^{-\imath 2 \pi x z}dx 
=a\frac{2^{\frac{3}{2}-\lambda}\sqrt{\pi}}{\Gamma\left(\lambda\right)}\left(2\pi a\vert z\vert\right)^{\lambda-\frac{1}{2}}K_{\lambda-\frac{1}{2}}\left(2\pi a\vert z\vert\right),
\]
\sloppy identity \eqref{Poisson} is recognized as the Poisson summation formula $\sum_{n=-\infty}^{\infty}s\left(n\right) = \sum_{n=-\infty}^{\infty}S\left(n\right)$
associated to the sequence 
$
s\left(n\right)=\left(1+\frac{n^2}{a^2}\right)^{-\lambda }
$
and its Fourier transform $S.$
The Bessel $K_{\lambda}$ function behaves asymptotically \cite[10.30.2 and 10.40.2]{NIST} like $\lim_{x\to0}x^{\nu}K_{\nu}\left(x\right)=2^{\nu-1}\Gamma\left(\nu\right)$ and, at infinity, $x^{\nu}K_{\nu}\left(x\right)\sim\sqrt{\frac{\pi}{2}}z^{\nu-\frac{1}{2}}e^{-z}$,
so that, for a large value of the parameter $a,$ 
\[
\frac{1}{a}\sum_{n\in\mathbb{Z}}\left(1+\frac{n^{2}}{a^{2}}\right)^{-\lambda}-B\left(\frac{1}{2},\lambda-\frac{1}{2}\right)\simeq\frac{2}{a}\frac{\left(\pi a\right)^{\lambda}}{\Gamma\left(\lambda\right)}e^{-2\pi a}.
\]
The mapping $\lambda\mapsto\frac{\left(\pi a\right)^{\lambda}}{\Gamma\left(\lambda\right)}$ is increasing over $\left(0,\lambda_{*}\right],$ with $\lambda_{*}$ defined as the unique solution
to the equation $\psi\left(\lambda_{*}\right)=\log\left(\pi a\right)$ (with $\psi\left(x\right)=\frac{\Gamma'\left(x\right)}{\Gamma\left(x\right)}$ the digamma function), and decreasing over $\left[\lambda_{*},\infty\right).$ 
Its maximum value, reached at $\lambda = \lambda_{*},$ is equal to
$
\frac{\left(\pi a\right)^{\lambda_{*}}}{\Gamma\left(\lambda_{*}\right)}.
$
For a sufficiently large value of the parameter $a$ (such as $a=10^{5}$), 
$\psi\left(\pi a\right)\simeq\log\left(\pi a\right)
$
so that 
$\lambda_{*}\simeq \pi a$ and, using Stirling's formula \cite[5.11.3]{NIST}, $\frac{\left(\pi a\right)^{\lambda_{*}}}{\Gamma\left(\lambda_{*}\right)}\simeq \sqrt{\frac{a}{2}} e^{\pi a}$, the error term is close to $\sqrt{\frac{2}{a}} e^{-\pi a}$. In the special case $a=10^5$,
the worst-case approximation error can be estimated as 
\begin{align*}
\frac{1}{10^{5}}\sum_{n\in\mathbb{Z}}\left(1+\frac{n^{2}}{10^{10}}\right)^{-\lambda}-B\left(\frac{1}{2},\lambda-\frac{1}{2}\right) & \simeq e^{-314165}\simeq 10^{-136440}.
\end{align*}
For example, in the case $\lambda=5,$
\[
\frac{1}{10^{5}}\sum_{n\in\mathbb{Z}}\left(1+\frac{n^{2}}{10^{10}}\right)^{-5}-B\left(\frac{1}{2},\frac{9}{2}\right) \simeq 2.2\times 10^{-272856}.
\]
As a final remark, the series (\ref{eq:normal sum})
and (\ref{eq:Student sum}) exhibited here are as slowly converging
as they are accurate, all the more as the scaling factor $a$ increases.
It can be experimentally checked that faster converging series are obtained for lower values of the parameter $a$ but at the cost of lower accuracy. 

\section{Zeta Function and Infinite Product of Matrices.}
\label{sec:zeta}
In the fascinating book \textit{Mathematical Constants} \cite{Finch},
S.R. Finch cites this unpublished result by W. Gosper \cite{Gosper}: 
\begin{equation}
\prod_{k=1}^{\infty}\left[\begin{array}{cc}
-\frac{k}{2\left(2k+1\right)} & \frac{5}{4k^{2}}\\
0 & 1
\end{array}\right]=\left[\begin{array}{cc}
0 & \zeta\left(3\right)\\
0 & 1
\end{array}\right],\label{eq:product 2}
\end{equation}
and, for $N\ge2,$ its
extension to the  $\left(N+1\right)\times\left(N+1\right)$ dimensional case

\setlength{\arraycolsep}{1pt}
\begin{align}
\hspace{-0.089872cm}\prod_{k=1}^{\infty}&\left[\begin{array}{cccccc}
\frac{-k}{2\left(2k+1\right)} & \frac{1}{2k\left(2k+1\right)} & 0 & \dots & 0 & \frac{1}{k^{2N}}\\
0 & \frac{-k}{2\left(2k+1\right)} & \frac{1}{2k\left(2k+1\right)} & \dots &  & \frac{1}{k^{2N-2}}\\
\vdots & \vdots & \vdots &  & \vdots & \vdots\\
0 & 0 & 0 & \dots & \frac{1}{2k\left(2k+1\right)} & \frac{1}{k^{4}}\\
0 & 0 & 0 & \dots & \frac{-k}{2\left(2k+1\right)} & \frac{5}{4k^{2}}\\
0 & 0 & 0 & \dots & 0 & 1
\end{array}\right]=\left[\begin{array}{cccc}
0 & \dots & 0 & \zeta\left(2N+1\right)\\
0 & \dots & 0 & \zeta\left(2N-1\right)\\
\vdots &  & \vdots & \vdots\\
0 & \dots & 0 & \zeta\left(5\right)\\
0 & \dots & 0 & \zeta\left(3\right)\\
0 & \dots & 0 & 1
\end{array}\right].
\label{eq:product n}
\end{align}
\setlength{\arraycolsep}{5pt}
In \cite{Gosper}, Gosper also gave a striking infinite matrix product representation of the beta function at the positive even integers which follows a similar pattern. The technique that he used to uncover these identities was partially brute-force computation by means of a modern symbolic processor, a technique not available to the pioneers of the past. Yet, these matrix product representations are quite useful, as it turns out to be more computationally efficient to approximate the value of zeta by means of a continuous pairwise multiplication of a finite number of the matrices from \eqref{eq:product n} until only one matrix is left than to compute the same value using the hypergeometric series representation of the zeta function. In fact, in 1985, Gosper used this technique to temporarily steal the record 
for the most digits of $\pi$ computed (with a mind-boggling 17 million digits) from Yasumasa Kanada and his colleagues in Japan.


Both identities in Entry \ref{Infinite matrix} are extensions of identities \eqref{eq:product 2} and \eqref{eq:product n} to the case of the zeta function computed at even arguments. Unexpectedly, the second part of Entry~\ref{Infinite matrix} turns out to be false, and a correction term involving a hyper-harmonic number is needed to fix it.
\begin{proposition}
The identity 
\begin{equation}
\prod_{n\ge1}\left[\begin{array}{ccc}
\frac{n}{2\left(2n+1\right)} & \frac{-3}{2n\left(2n+1\right)} & \frac{3}{2n^{3}}\\
0 & \frac{n}{2\left(2n+1\right)} & \frac{3}{2n}\\
0 & 0 & 1
\end{array}\right]=\left[\begin{array}{ccc}
0 & 0 & \zeta\left(4\right)\\
0 & 0 & \zeta\left(2\right)\\
0 & 0 & 1
\end{array}\right]\label{eq:zeta(4)}
\end{equation}
is  true while the identity
\begin{equation}
\prod_{n\ge1}\left[\begin{array}{cccc}
\frac{n}{2\left(2n+1\right)} & \frac{-3}{2n\left(2n+1\right)} & 0 & \frac{3}{2n^{5}}\\
0 & \frac{n}{2\left(2n+1\right)} & \frac{-3}{2n\left(2n+1\right)} & \frac{3}{2n^{3}}\\
0 & 0 & \frac{n}{2\left(2n+1\right)} & \frac{3}{2n}\\
0 & 0 & 0 & 1
\end{array}\right]=\left[\begin{array}{cccc}
0 & 0 & 0 & \zeta\left(6\right)\\
0 & 0 & 0 & \zeta\left(4\right)\\
0 & 0 & 0 & \zeta\left(2\right)\\
0 & 0 & 0 & 1
\end{array}\right]\label{eq:zeta(6)}
\end{equation}
is false. 
A correct version of \eqref{eq:zeta(6)} is 
\begin{equation}
\label{eq:zeta(6) correct version}
\hspace{-2mm}\prod_{n\ge1}\left[\begin{array}{cccc}
\frac{n}{2\left(2n+1\right)} & \frac{-3}{2n\left(2n+1\right)} & 0 & \frac{3}{2n^{5}}-\frac{9H_{n-1}^{\left(4\right)}}{2n}\\
0 & \frac{n}{2\left(2n+1\right)} & \frac{-3}{2n\left(2n+1\right)} & \frac{3}{2n^{3}}\\
0 & 0 & \frac{n}{2\left(2n+1\right)} & \frac{3}{2n}\\
0 & 0 & 0 & 1
\end{array}\right]=\left[\begin{array}{cccc}
0 & 0 & 0 & \zeta\left(6\right)\\
0 & 0 & 0 & \zeta\left(4\right)\\
0 & 0 & 0 & \zeta\left(2\right)\\
0 & 0 & 0 & 1
\end{array}\right]    
\end{equation}
with the hyper-harmonic number defined for $n\ge 1$ by $H_n^{\left(p\right)}:=\sum_{k=1}^n \frac{1}{k^p}$.
\end{proposition}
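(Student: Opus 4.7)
The plan is to work with the partial products $P_N := \prod_{n=1}^N M_n$, where $M_n$ denotes the general factor in each infinite product. Since every $M_n$ is upper triangular with diagonal $\bigl(\tfrac{n}{2(2n+1)},\ldots,\tfrac{n}{2(2n+1)},1\bigr)$, so is $P_N$, and its repeated diagonal entry equals $\alpha_N := \prod_{n=1}^N \tfrac{n}{2(2n+1)} = \tfrac{(N!)^2}{(2N+1)!}$, which decays like $4^{-N}/\sqrt{N}$. Each matrix-product identity therefore splits into two tasks: (i) showing that every strictly upper-triangular entry outside the last column vanishes, and (ii) showing that the entries of the last column converge to the prescribed vector of zeta values.

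For (i) I would expand $P_{n+1}=P_n M_{n+1}$ entrywise, normalize by $\alpha_{n+1}$ to cancel the diagonal factor, and telescope. The resulting closed forms---$-3\alpha_N H_N^{(2)}$ for the $(1,2)$ entry in both sizes and also for the $(2,3)$ entry in the $4\times 4$ case, and $9\alpha_N\sum_{k=1}^N \tfrac{H_{k-1}^{(2)}}{k^2}$ for the $(1,3)$ entry in the $4\times 4$ case---all carry a factor of $\alpha_N$ against polylogarithmically-growing sums, hence vanish. For (ii) the same telescoping yields genuine partial sums, and the elementary reduction
\[
\frac{\alpha_{n-1}}{2n}=\frac{((n-1)!)^2}{2n(2n-1)!}=\frac{1}{n^2\binom{2n}{n}}
\]
recasts each such partial sum as a Comtet-style accelerated series. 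In the $3\times 3$ case this gives immediately $P_N(2,3)\to 3\sum_{n\ge 1}\tfrac{1}{n^2\binom{2n}{n}}=\zeta(2)$ and, after substituting $P_n(1,2)=-3\alpha_n H_n^{(2)}$ and telescoping,
\[
P_N(1,3)\to 3\sum_{n\ge 1}\tfrac{1}{n^4\binom{2n}{n}}-9\sum_{n\ge 1}\tfrac{H_{n-1}^{(2)}}{n^2\binom{2n}{n}},
\]
which equals $\zeta(4)$ by a classical Euler-sum identity (equivalent, via Comtet's $\zeta(4)=\tfrac{36}{17}\sum\tfrac{1}{n^4\binom{2n}{n}}$, to evaluating the harmonic sum to $\tfrac{5}{108}\zeta(4)$). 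This settles \eqref{eq:zeta(4)}.

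For the $4\times 4$ matrix in \eqref{eq:zeta(6)} the same bookkeeping yields $P_N(3,4)\to\zeta(2)$ and $P_N(2,4)\to\zeta(4)$ by the very same two identities, but $P_N(1,4)$ acquires a combination of $\sum\tfrac{1}{n^6\binom{2n}{n}}$, $\sum\tfrac{H_{n-1}^{(2)}}{n^4\binom{2n}{n}}$, and a nested double sum $\sum\tfrac{1}{n^2\binom{2n}{n}}\sum_{k<n}\tfrac{H_{k-1}^{(2)}}{k^2}$. Because no Comtet-type identity expresses $\zeta(6)$ as a rational multiple of $\sum\tfrac{1}{n^6\binom{2n}{n}}$ alone, the naive extrapolation from the $3\times 3$ pattern cannot land on $\zeta(6)$---which is precisely why \eqref{eq:zeta(6)} is false; a high-precision numerical check against $\zeta(6)$ both confirms the discrepancy and suggests the shape of the missing term. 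Inserting the correction $-\tfrac{9H_{n-1}^{(4)}}{2n}$ into the $(1,4)$ entry of $M_n$ contributes exactly $-9\sum_{n\ge 1}\tfrac{H_{n-1}^{(4)}}{n^2\binom{2n}{n}}$ to $P_N(1,4)$, so \eqref{eq:zeta(6) correct version} becomes equivalent to a single weight-$6$ Euler-sum identity that writes $\zeta(6)$ as the resulting combination. The main obstacle is verifying this last identity: the plan would be to flatten the nested $H^{(2)}$ double sum by symmetrization, reducing everything to a linear combination of standard weight-$6$ Euler sums with central-binomial weights, and then match coefficients against the known evaluations of such sums (in the style of Borwein--Bradley or Flajolet--Salvy).
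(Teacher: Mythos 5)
Your structural skeleton is sound and essentially parallels the paper's: the block-triangular form reduces the whole product to the vector recursion $\mathbf{v}_{n}=\sum_{i\le n}\A_{1}\cdots \A_{i-1}\mathbf{u}_{i}$, the identity $\alpha_{1}\cdots\alpha_{p-1}=\frac{2}{p\binom{2p}{p}}$ turns everything into central-binomial (Ap\'ery-like) sums, $\frac{\beta_j}{\alpha_j}=-\frac{3}{j^2}$ produces the hyper-harmonic weights, and the off-column entries die because they carry the factor $\alpha_N\sim 4^{-N}$. Your $3\times3$ computation, including $3\sum\frac{1}{n^4\binom{2n}{n}}-9\sum\frac{H_{n-1}^{(2)}}{n^2\binom{2n}{n}}=\zeta(4)$, matches the paper's exactly.

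The gap is at the crux of the proposition, the weight-$6$ case. You correctly reduce the $(1,4)$ entry to a combination of $\sum\frac{1}{n^6\binom{2n}{n}}$, $\sum\frac{H_{n-1}^{(2)}}{n^4\binom{2n}{n}}$ and the nested sum $\sum\frac{1}{n^2\binom{2n}{n}}\sum_{k<n}\frac{H_{k-1}^{(2)}}{k^2}=\sum\frac{H_{n-1}^{(2,2)}}{n^2\binom{2n}{n}}$, but you then defer the decisive step to ``match coefficients against the known evaluations of such sums.'' That identity \emph{is} the theorem here; without exhibiting it you have proved neither \eqref{eq:zeta(6) correct version} nor, a fortiori, the falsity of \eqref{eq:zeta(6)}. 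The paper gets all three needed evaluations from a single source: the Bailey--Borwein--Bradley generating function \eqref{eq:Borweinzeta}, whose coefficients of $z^{0}$, $z^{2}$, $z^{4}$ give Markov's $\zeta(2)$ formula, your $\zeta(4)$ identity, and
\[
\zeta(6)=3\sum_{k\ge1}\frac{1}{\binom{2k}{k}k^{2}}\left[17H_{k-1}^{(2,2)}+H_{k-1}^{(4)}-4\bigl(H_{k-1}^{(2)}\bigr)^{2}-\frac{3H_{k-1}^{(2)}}{k^{2}}+\frac{1}{k^{4}}\right],
\]
which, combined with the symmetrization $\bigl(H_{n}^{(2)}\bigr)^{2}=2H_{n}^{(2,2)}+H_{n}^{(4)}$ (your ``flattening'' step, made precise), matches the matrix-product expression only after the extra $-\frac{9H_{n-1}^{(4)}}{2n}$ is inserted. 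This is exactly the Borwein--Bradley input you gesture at, so your plan is salvageable, but you must actually produce this identity. Two smaller points: your heuristic that ``no Comtet-type identity expresses $\zeta(6)$ as a rational multiple of $\sum\frac{1}{n^6\binom{2n}{n}}$'' is irrelevant to the falsity claim (the $(1,4)$ entry is never that single sum anyway), and a ``high-precision numerical check'' is not a proof. The clean argument, which your own setup supplies, is that the corrected and uncorrected products differ by $\delta=9\sum_{k\ge1}\frac{H_{k-1}^{(4)}}{k^{2}\binom{2k}{k}}$, a sum of strictly positive terms; once \eqref{eq:zeta(6) correct version} is established, \eqref{eq:zeta(6)} is false because $\delta>0$.
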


Both identities \eqref{eq:zeta(4)} and \eqref{eq:zeta(6) correct version} 
are based on the following identity by Borwein et al. \cite[Thm. 1.1]{BorweinZeta} that produces 
a generating function for the values of the Riemann zeta function at even arguments:
\begin{equation}
\sum_{n\ge1}\frac{1}{n^{2}-z^2}=3\sum_{k\ge1}\frac{1}{\binom{2k}{k}}\frac{1}{k^{2}-z^2}\prod_{j=1}^{k-1}\frac{j^{2}-4z^2}{j^{2}-z^2}.\label{eq:Borweinzeta}
\end{equation}
This reappearance of the Borwein family in the context of yet another problem was an unexpected surprise that puts their mathematical prowess on display. 

In order to express \eqref{eq:Borweinzeta} as an infinite product of matrices, we notice that each $\left(N+1\right)\times \left(N+1\right)$ matrix in \eqref{eq:zeta(4)} and \eqref{eq:zeta(6) correct version}  is of the form
\[
\M_{k}=\left[\begin{array}{cc}
\A_{k} & \mathbf{u}_{k}\\
\mathbf{0} & 1
\end{array}\right],
\]
where $\A_{k}$ is a square $\left(N\times N\right)$ matrix, $\mathbf{u}_{k} = \left[u_k^{(N)},\dots , u_k^{(1)}\right]^t$ is an
$\left(N\times1\right)$ vector and $\mathbf{0}$ is the $\left(1\times N\right)$ vector of zeros. Moreover, each matrix $\A_{k}$ has the simple form $\A_{k}=\alpha_{k}\mathbf{I}+\beta_{k}\mathbf{J}$, where $\mathbf{I}$ is the $\left(N\times N\right)$ identity matrix and  $\mathbf{J}$ is the $\left(N\times N\right)$ matrix with all entries equal to $0$, with the exception of a first superdiagonal
of ones. Assuming convergence of the infinite product, we deduce the following:
\begin{proposition}
\label{mainlem}
The vector $\mathbf{v}_\infty$ such that
\begin{equation}
\prod_{k=1}^{\infty}\left[\begin{array}{cc}
\A_k & \mathbf{u}_k\\
\mathbf{0} & 1
\end{array}\right]=\left[\begin{array}{cc}
\prod_{k=1}^\infty \A_k & \mathbf{v}_\infty\\
\mathbf{0} & 1
\end{array}\right],\label{eq:product 3}
\end{equation}
satisfies
\begin{equation}
\label{vinfty}    
\mathbf{v}_{\infty}=\left[\begin{array}{c}
v_{\infty}^{\left(N\right)}\\
\vdots\\
v_{\infty}^{\left(1\right)}
\end{array}\right]=\sum_{p=1}^\infty\A_{1}\dots \A_{p-1}\mathbf{u}_{p}.
\end{equation}
Its $\ell$-th component $v_{\infty}^{\left(\ell\right)}$, $1 \le \ell \le N,$ is given by
\begin{align*}
\sum_{p=1}^\infty\left(u_{p}^{\left(\ell\right)}+\left(\sum_{j=1}^{p-1}\frac{\beta_{j}}{\alpha_{j}}\right)u_{p}^{\left(\ell-1\right)}+\cdots+\left(\sum_{\substack{j_1,\ldots,j_{\ell-1}\\1\le j_{i}<j_{i+1}\le p-1}}
\prod_{q=1}^{\ell-1}\frac{\beta_{j_{q}}}{\alpha_{j_{q}}}\right)
u_{p}^{\left(1\right)}\right)\prod_{q=1}^{p-1}\alpha_{q},
\end{align*}
with the first cases
\begin{align*}
v_{\infty}^{\left(1\right)}&=\sum_{p=1}^\infty\left(\alpha_{1}\cdots\alpha_{p-1}\right)u_{p}^{\left(1\right)},\\
v_{\infty}^{\left(2\right)}&=\sum_{p=1}^\infty\left(\alpha_{1}\cdots\alpha_{p-1}\right)\left(u_{p}^{\left(2\right)}+\left(\sum_{j=1}^{p-1}\frac{\beta_{j}}{\alpha_{j}}\right)u_{p}^{\left(1\right)}\right). \nonumber\\
\end{align*}
\end{proposition}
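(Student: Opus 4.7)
The plan has two stages: first, establish the finite-product version by induction and pass to the limit to obtain \eqref{vinfty}; second, extract the $\ell$-th component by using commutativity of the $\mathbf{A}_k$ and nilpotency of $\mathbf{J}$.

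For the first stage, suppose inductively that
\[
\prod_{k=1}^{N}\mathbf{M}_{k} = \begin{bmatrix}\prod_{k=1}^{N}\mathbf{A}_{k} & \mathbf{v}_{N}\\ \mathbf{0} & 1\end{bmatrix}.
\]
The block form is preserved under right-multiplication by $\mathbf{M}_{N+1}$ because the last row of every $\mathbf{M}_{k}$ is $[\mathbf{0},\,1]$, and a direct computation yields the recursion $\mathbf{v}_{N+1} = \mathbf{v}_{N} + \bigl(\prod_{k=1}^{N}\mathbf{A}_{k}\bigr)\mathbf{u}_{N+1}$. Telescoping from $\mathbf{v}_{0} = \mathbf{0}$ (with the convention that an empty product equals $\mathbf{I}$) gives $\mathbf{v}_{N} = \sum_{p=1}^{N}\mathbf{A}_{1}\cdots\mathbf{A}_{p-1}\mathbf{u}_{p}$; letting $N\to\infty$ under the assumed convergence of the matrix product produces \eqref{vinfty}.

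For the second stage, the key observation is that each $\mathbf{A}_{k} = \alpha_{k}\mathbf{I}+\beta_{k}\mathbf{J}$ is a polynomial in $\mathbf{J}$, so the $\mathbf{A}_{k}$ mutually commute and the product expands as
\[
\mathbf{A}_{1}\cdots\mathbf{A}_{p-1} = \Bigl(\prod_{q=1}^{p-1}\alpha_{q}\Bigr)\sum_{m=0}^{N-1}e_{m}\!\left(\frac{\beta_{1}}{\alpha_{1}},\ldots,\frac{\beta_{p-1}}{\alpha_{p-1}}\right)\mathbf{J}^{m},
\]
where $e_{m}$ denotes the $m$-th elementary symmetric polynomial and the upper limit $N-1$ is enforced by the nilpotency $\mathbf{J}^{N}=0$. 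Since the nonzero entries of $\mathbf{J}$ lie on the first superdiagonal, $(\mathbf{J}\mathbf{w})_{i} = w_{i+1}$; with the top-to-bottom ordering $\mathbf{u}_{p} = [u_{p}^{(N)},\ldots,u_{p}^{(1)}]^{t}$, the row carrying $v_{\infty}^{(\ell)}$ is row $N-\ell+1$, so $(\mathbf{J}^{m}\mathbf{u}_{p})_{N-\ell+1} = u_{p}^{(\ell-m)}$ for $0\le m\le\ell-1$ and vanishes otherwise. Reading off the $\ell$-th component of $\mathbf{A}_{1}\cdots\mathbf{A}_{p-1}\mathbf{u}_{p}$ and rewriting each $e_{m}$ as $\sum_{1\le j_{1}<\cdots<j_{m}\le p-1}\prod_{q}\beta_{j_{q}}/\alpha_{j_{q}}$ reproduces the claimed expression verbatim; the displayed cases $\ell = 1,2$ match via $e_{0} = 1$ and $e_{1}(x_{1},\ldots,x_{p-1}) = \sum_{j}x_{j}$.

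The main obstacle is really bookkeeping: one must reconcile the top-to-bottom indexing of $\mathbf{u}_{p}$ with the upward shift induced by $\mathbf{J}$, and verify that the truncation of the elementary symmetric sum occurs at $m = \ell-1$ (forced by $u_{p}^{(\ell-m)} = 0$ for $\ell-m<1$) rather than merely at $m = N-1$. A minor technical point is that factoring out $\prod\alpha_{q}$ presupposes $\alpha_{j}\neq 0$, which holds in Entry~\ref{Infinite matrix} since $\alpha_{k} = k/(2(2k+1))>0$; in general the unfactored expansion $\prod_{j}(\alpha_{j}\mathbf{I}+\beta_{j}\mathbf{J}) = \sum_{S\subseteq\{1,\ldots,p-1\}}\prod_{j\in S}\beta_{j}\prod_{j\notin S}\alpha_{j}\,\mathbf{J}^{|S|}$ remains valid without this assumption.
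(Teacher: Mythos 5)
Your proof is correct and takes essentially the route the paper leaves implicit: the block-multiplication induction giving the finite recursion $\mathbf{v}_{n}=\sum_{i=1}^{n}\mathbf{A}_{1}\cdots\mathbf{A}_{i-1}\mathbf{u}_{i}$ (which the paper only remarks upon without proof), followed by expanding the commuting factors $\alpha_{k}\mathbf{I}+\beta_{k}\mathbf{J}$ into elementary symmetric polynomials of the $\beta_{j}/\alpha_{j}$ and using $\mathbf{J}^{N}=0$. Your bookkeeping of the reversed component ordering, the shift $(\mathbf{J}^{m}\mathbf{u}_{p})_{N-\ell+1}=u_{p}^{(\ell-m)}$, and the truncation at $m=\ell-1$ all check out against the stated formula and the $\ell=1,2$ special cases.
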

Notice that \eqref{vinfty} is the asymptotic version of the more general identity, with $n \ge 1,$ $\mathbf{v}_{n}=\sum_{i=1}^{n}A_{1}\dots A_{i-1}\mathbf{u}_{i}$. From Proposition \ref{mainlem}, we deduce
\begin{proposition}
An infinite matrix product representation for $\zeta\left(2\right)$ and $\zeta\left(4\right)$
is 
\[
\prod_{n\ge1}\left(\begin{array}{ccc}
\frac{n}{2\left(2n+1\right)} & \frac{-3}{2n\left(2n+1\right)} & \frac{3}{2n^{3}}\\
0 & \frac{n}{2\left(2n+1\right)} & \frac{3}{2n}\\
0 & 0 & 1
\end{array}\right)=\left(\begin{array}{ccc}
0 & 0 & \zeta\left(4\right)\\
0 & 0 & \zeta\left(2\right)\\
0 & 0 & 1
\end{array}\right).
\]
\end{proposition}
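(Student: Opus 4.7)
The plan is to apply Proposition~\ref{mainlem} to the specific matrix of \eqref{eq:zeta(4)}, with $N=2$, $\alpha_{k}=\frac{k}{2(2k+1)}$, $\beta_{k}=\frac{-3}{2k(2k+1)}$, and $\mathbf{u}_{k}=\bigl[\frac{3}{2k^{3}},\frac{3}{2k}\bigr]^{t}$, so that $u_{k}^{(2)}=\frac{3}{2k^{3}}$ and $u_{k}^{(1)}=\frac{3}{2k}$. A routine manipulation of odd-integer factorials gives the closed form
\[
\alpha_{1}\cdots\alpha_{p-1}=\frac{((p-1)!)^{2}}{(2p-1)!}=\frac{2}{p\binom{2p}{p}},
\]
while the ratio collapses to $\beta_{j}/\alpha_{j}=-3/j^{2}$, so $\sum_{j=1}^{p-1}\beta_{j}/\alpha_{j}=-3H_{p-1}^{(2)}$. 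Substituting into the two formulas delivered by Proposition~\ref{mainlem} reduces the problem to the two scalar identities
\[
3\sum_{p\ge 1}\frac{1}{p^{2}\binom{2p}{p}}=\zeta(2), \qquad 3\sum_{p\ge 1}\frac{1}{p^{4}\binom{2p}{p}}-9\sum_{p\ge 1}\frac{H_{p-1}^{(2)}}{p^{2}\binom{2p}{p}}=\zeta(4).
\]

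Both identities I would read off directly from the Borwein generating function \eqref{eq:Borweinzeta} by Taylor-expanding around $z=0$. The left-hand side of \eqref{eq:Borweinzeta} is $\sum_{m\ge 0}\zeta(2m+2)z^{2m}$, so its constant term is $\zeta(2)$ and its $z^{2}$-coefficient is $\zeta(4)$. On the right-hand side, $\frac{1}{k^{2}-z^{2}}=\frac{1}{k^{2}}(1+z^{2}/k^{2}+\cdots)$, and the crucial expansion
\[
\frac{j^{2}-4z^{2}}{j^{2}-z^{2}}=1-\frac{3z^{2}}{j^{2}}+O(z^{4})
\]
gives $\prod_{j=1}^{k-1}\frac{j^{2}-4z^{2}}{j^{2}-z^{2}}=1-3H_{k-1}^{(2)}z^{2}+O(z^{4})$. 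Matching constant terms reproduces the first scalar identity, and matching coefficients of $z^{2}$ reproduces the second.

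Finally, I would verify that the upper-left $2\times 2$ block of the infinite product actually vanishes, which is needed for the zeros in the target matrix. Since $\mathbf{J}^{2}=\mathbf{0}$ in the $2\times 2$ case, the algebra generated by $\mathbf{I}$ and $\mathbf{J}$ is commutative, and writing $\A_{k}=\alpha_{k}(\mathbf{I}+(\beta_{k}/\alpha_{k})\mathbf{J})$ collapses the partial products to
\[
\prod_{k=1}^{n}\A_{k}=\Bigl(\prod_{k=1}^{n}\alpha_{k}\Bigr)\bigl(\mathbf{I}-3H_{n}^{(2)}\mathbf{J}\bigr),
\]
with $\prod_{k=1}^{n}\alpha_{k}=1/((2n+1)\binom{2n}{n})\to 0$ and $H_{n}^{(2)}$ bounded, so the product tends to $\mathbf{0}$. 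The only nontrivial step in this plan is the Taylor extraction: one must see that the $-3H_{k-1}^{(2)}$ coming out of the Borwein product is precisely the $\sum_{j<p}\beta_{j}/\alpha_{j}$ demanded by Proposition~\ref{mainlem} at first order in $z^{2}$, with no further correction --- exactly the structural coincidence that breaks down at higher order and forces the hyper-harmonic repair term $-9H_{n-1}^{(4)}/(2n)$ appearing in \eqref{eq:zeta(6) correct version}.
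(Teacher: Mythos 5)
Your proposal is correct and follows essentially the same route as the paper: apply Proposition~\ref{mainlem} with $\alpha_k=\frac{k}{2(2k+1)}$, $\beta_k=\frac{-3}{2k(2k+1)}$, reduce to the two central binomial sums, and identify them with $\zeta(2)$ and $\zeta(4)$ by extracting the constant and $z^2$ coefficients of the Borwein generating function \eqref{eq:Borweinzeta}. You are in fact slightly more complete than the paper in two places the authors leave implicit --- writing out the Taylor expansion $\prod_{j<k}\frac{j^2-4z^2}{j^2-z^2}=1-3H_{k-1}^{(2)}z^2+O(z^4)$ rather than citing Markov's identity, and verifying via $\mathbf{J}^2=\mathbf{0}$ that the upper-left block of the product vanishes.
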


\begin{proof}
With the notations above, this is the case
$\alpha_{n}=\frac{n}{2\left(2n+1\right)},$ $\beta_n=\frac{-3}{2n(2n+1)}$ and $\mathbf{u}_{n}=\left[\begin{array}{c}
\frac{3}{2n^3}\ ,\ \frac{3}{2n}
\end{array}\right]^t,$
from which we deduce
\begin{align*}
\mathbf{v}_{\infty} & =
\left[\begin{array}{c}
\sum_{i=1}^{\infty}\frac{2}{i\binom{2i}{i}}\left(\frac{3}{2i^3}-\sum_{j=1}^{i-1}\frac{3}{j^2}\frac{3}{2i}\right)\\
\sum_{i=1}^{\infty}\frac{2}{i\binom{2i}{i}}\frac{3}{2i}
\end{array}\right].
\end{align*}
Using Markov's identity (see \cite{Kondratieva} for an historical perspective on this identity) 
\[\zeta\left(2\right)=\sum_{i=1}^{\infty}\frac{3}{i^{2}\binom{2i}{i}},\]
the first component of $\mathbf{v}_{\infty}$ is identified as
$v_{\infty}^{\left(1\right)}=\zeta\left(2\right).$ Moreover, it can be easily checked that $\prod_{k=1}^\infty \A_k$ equals the $N\times N$ null matrix.
Notice that Markov's identity can be obtained by identifying the constant term on both sides of Borwein's generating function \eqref{eq:Borweinzeta}. 
Moreover,
\[
v_{\infty}^{\left(2\right)}=\sum_{i=1}^{\infty}\frac{3}{2i^3\binom{2i}{i}}+\sum_{i=1}^{\infty}\frac{3}{2i^2\binom{2i}{i}}\sum_{j=1}^{i-1}\frac{\beta_{j}}{\alpha_{j}}=\zeta\left(4\right)
\]
by identifying the coefficient of $z^2$ in Borwein's identity \eqref{eq:Borweinzeta}.
\end{proof}
Unfortunately, the extension of the previous representation to $\zeta\left(6\right)$ is not
as straightforward.
\begin{proposition}
An infinite matrix product 
representation for $\zeta\left(2\right),\thinspace\thinspace\zeta\left(4\right)$
and $\zeta\left(6\right)$ is 
\[
\prod_{n\ge1}\left[\begin{array}{cccc}
\frac{n}{2\left(2n+1\right)} & -\frac{3}{2n\left(2n+1\right)} & 0 & \frac{3}{2n^{5}}-\frac{9H_{n-1}^{\left(4\right)}}{2n}\\
0 & \frac{n}{2\left(2n+1\right)} & -\frac{3}{2n\left(2n+1\right)} & \frac{3}{2n^{3}}\\
0 & 0 & \frac{n}{2\left(2n+1\right)} & \frac{3}{2n}\\
0 & 0 & 0 & 1
\end{array}\right]=\left[\begin{array}{cccc}
0 & 0 & 0 & \zeta\left(6\right)\\
0 & 0 & 0 & \zeta\left(4\right)\\
0 & 0 & 0 & \zeta\left(2\right)\\
0 & 0 & 0 & 1
\end{array}\right].
\]
\end{proposition}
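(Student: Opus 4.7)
The plan is to apply Proposition~\ref{mainlem} with $\alpha_n=\frac{n}{2(2n+1)}$, $\beta_n=\frac{-3}{2n(2n+1)}$, and three--component vector
$\mathbf{u}_n=\bigl[\,\tfrac{3}{2n^{5}}-\tfrac{9H_{n-1}^{(4)}}{2n},\ \tfrac{3}{2n^{3}},\ \tfrac{3}{2n}\,\bigr]^{t}$,
and to identify $v_\infty^{(1)},v_\infty^{(2)},v_\infty^{(3)}$ with $\zeta(2),\zeta(4),\zeta(6)$ by reading off the coefficients of $z^{0},z^{2},z^{4}$ on both sides of Borwein's generating function \eqref{eq:Borweinzeta}. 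A short telescoping gives $\prod_{q=1}^{p-1}\alpha_q=\frac{2}{p\binom{2p}{p}}$ and $\frac{\beta_j}{\alpha_j}=-\frac{3}{j^{2}}$, so the elementary symmetric sums entering the closed form of Proposition~\ref{mainlem} reduce to $-3H_{p-1}^{(2)}$ and $\frac{9}{2}\bigl[(H_{p-1}^{(2)})^{2}-H_{p-1}^{(4)}\bigr]$. The cases $\ell=1,2$ are already handled by the preceding proposition and do not feel the correction in $\mathbf{u}_n$ (which affects only $v_\infty^{(3)}$ through $u_n^{(3)}$), so the only new work is to verify $v_\infty^{(3)}=\zeta(6)$.

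For this, I would expand \eqref{eq:Borweinzeta} as a power series in $z^{2}$ using
\[
\frac{j^{2}-4z^{2}}{j^{2}-z^{2}}=1-\frac{3z^{2}}{j^{2}}-\frac{3z^{4}}{j^{4}}-\cdots \qquad\text{and}\qquad \frac{1}{k^{2}-z^{2}}=\sum_{m\ge 0}\frac{z^{2m}}{k^{2m+2}},
\]
and extract the coefficient of $z^{4}$ on both sides to obtain the explicit series
\[
\zeta(6)=\sum_{k\ge 1}\frac{3}{\binom{2k}{k}}\left[\frac{1}{k^{6}}-\frac{3H_{k-1}^{(2)}}{k^{4}}+\frac{9(H_{k-1}^{(2)})^{2}-15H_{k-1}^{(4)}}{2k^{2}}\right].
\]
A term-by-term comparison with the expression that Proposition~\ref{mainlem} produces for $v_\infty^{(3)}$ then matches the two series exactly, provided the top entry of $\mathbf{u}_n$ is $\frac{3}{2n^{5}}-\frac{9H_{n-1}^{(4)}}{2n}$; dropping the $H^{(4)}$ correction leaves the spurious residual $\sum_k \frac{-9H_{k-1}^{(4)}}{k^{2}\binom{2k}{k}}$, which is precisely why \eqref{eq:zeta(6)} fails. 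The convergence $\prod_{k\ge 1}\A_k=\mathbf{0}$ follows exactly as in the $\zeta(4)$ case, from $\A_k=\alpha_k\mathbf{I}+\beta_k\mathbf{J}$ with $\mathbf{J}^{3}=\mathbf{0}$, commutativity of the two summands, and $\prod_{q=1}^{k}\alpha_q\to 0$ geometrically.

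The step I expect to be the main obstacle is recognizing \emph{why} a correction must appear at all and computing its precise form. The issue is that each factor $\frac{j^{2}-4z^{2}}{j^{2}-z^{2}}$ already has a nonzero $z^{4}$ coefficient $-3/j^{4}$, whereas for the $\zeta(4)$ identity \eqref{eq:zeta(4)} only the quadratic term $-3z^{2}/j^{2}$ was relevant. Consequently, the $z^{4}$ coefficient of $\prod_{j=1}^{k-1}\frac{j^{2}-4z^{2}}{j^{2}-z^{2}}$ splits into an off-diagonal symmetric piece $\frac{9}{2}[(H_{k-1}^{(2)})^{2}-H_{k-1}^{(4)}]$, which Proposition~\ref{mainlem} automatically reproduces through the double sum $\sum_{j_1<j_2}\frac{\beta_{j_1}\beta_{j_2}}{\alpha_{j_1}\alpha_{j_2}}$ acting on $u_p^{(1)}$, plus a diagonal piece $-3H_{k-1}^{(4)}$ that the proposition cannot generate and that must therefore be absorbed by hand into $u_p^{(3)}$. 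Propagating this diagonal contribution through the prefactor $\frac{3}{k^{2}\binom{2k}{k}}$ yields exactly the required correction $-\frac{9H_{n-1}^{(4)}}{2n}$.
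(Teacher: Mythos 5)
Your proposal is correct and follows essentially the same route as the paper: apply Proposition~\ref{mainlem} with $\alpha_n=\frac{n}{2(2n+1)}$, $\beta_n=\frac{-3}{2n(2n+1)}$, and match $v_\infty^{(3)}$ against the coefficient of $z^4$ in Borwein's generating function \eqref{eq:Borweinzeta}; your series for $\zeta(6)$ agrees with the paper's (which is written via $H_{k-1}^{(2,2)}$ and then converted using $(H_{n}^{(2)})^2=2H_{n}^{(2,2)}+H_{n}^{(4)}$, exactly the identity implicit in your $\frac{9}{2}[(H_{p-1}^{(2)})^2-H_{p-1}^{(4)}]$). Your closing explanation of why the correction term is forced — the matrix structure only generates the off-diagonal part $9H^{(2,2)}_{k-1}$ of the $z^4$ coefficient of $\prod_j\frac{j^2-4z^2}{j^2-z^2}$, leaving the diagonal piece $-3H^{(4)}_{k-1}$ to be absorbed into $u_n^{(3)}$ — is consistent with, and slightly more illuminating than, the paper's presentation.
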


\begin{proof}
Identifying the coefficient of $z^{4}$ in Borwein's identity (\ref{eq:Borweinzeta})
produces
\[
\zeta\left(6\right)=3\sum_{k\ge1}\frac{1}{\binom{2k}{k}k^{2}}\left[17H_{k-1}^{\left(2,2\right)}+H_{k-1}^{\left(4\right)}-4\left(H_{k-1}^{\left(2\right)}\right)^{2}-\frac{3H_{k-1}^{\left(2\right)}}{k^{2}}+\frac{1}{k^{4}}\right].
\]
with the bivariate hyper-harmonic sums
\[
H_{n}^{\left(p,q\right)}=\sum_{k_1>k_2\ge1}\frac{1}{k_1^{p}k_2^{q}}.
\]
Moreover, the vector $\mathbf{v}_{n}=
\sum_{i=1}^{n}A_{1}\cdots A_{i-1}\mathbf{u}_{i}$ is computed as 
\[
\mathbf{v}_{n}=\sum_{i=1}^{n}\frac{2}{\binom{2i}{i}i}\left[\left[\begin{array}{c}
u_{i}^{\left(3\right)}\\
u_{i}^{\left(2\right)}\\
u_{i}^{\left(1\right)}
\end{array}\right]
\hspace{-0.1cm}
-3H_{i-1}^{\left(2\right)}\left[\begin{array}{c}
u_{i}^{\left(2\right)}\\
u_{i}^{\left(1\right)}\\
0
\end{array}\right]+9H_{i-1}^{\left(2,2\right)}\left[\begin{array}{c}
u_{i}^{\left(1\right)} \\
0\\
0
\end{array}\right]\right].
\]
Hence,
\begin{align*}
v_{\infty}^{\left(3\right)} & 
  =\sum_{i=1}^{\infty}\frac{2}{\binom{2i}{i}i}\left[u_{i}^{\left(3\right)}-3H_{i-1}^{\left(2\right)}\frac{3}{2i^{3}}+9H_{i-1}^{\left(2,2\right)}\frac{3}{2i}\right].
\end{align*}
Using the identity $\left(H_{n}^{\left(2\right)}\right)^2=2H_{n}^{\left(2,2\right)}+H_{n}^{\left(4\right)}$, which is a consequence of the more general and elementary identity $\sum_{i<j}a_{i}a_{j}+\sum_{i>j}a_{i}a_{j}+\sum_{i=j}a_{i}a_{j}=\left(\sum_{i}a_{i}\right)^2$,
and identifying $v_{\infty}^{\left(3\right)}=\zeta\left(6\right)$
produces the result.
\end{proof}
Similar computations show that another version of \eqref{eq:zeta(6) correct version} is given by
\begin{equation}
\label{alternate version}
\hspace{-2mm}
\prod_{n\ge1}\left[\begin{array}{cccc}
\frac{n}{2\left(2n+1\right)} & -\frac{3}{2n\left(2n+1\right)} & 0 & \frac{3}{2n^{5}}\\
0 & \frac{n}{2\left(2n+1\right)} & -\frac{3}{2n\left(2n+1\right)} & \frac{3}{2n^{3}}\\
0 & 0 & \frac{n}{2\left(2n+1\right)} & \frac{3}{2n}\\
0 & 0 & 0 & 1
\end{array}\right]
=\left[\begin{array}{cccc}
0 & 0 & 0 & \zeta\left(6\right)+ \delta\\
0 & 0 & 0 & \zeta\left(4\right)\\
0 & 0 & 0 & \zeta\left(2\right)\\
0 & 0 & 0 & 1
\end{array}\right]
\end{equation}
with the constant $\delta=9\sum_{n\ge1}\frac{H_{n-1}^{(4)}}{\binom{2k}{k}k^2}\simeq 0.438668$.
The symmetry between representations \eqref{eq:zeta(6) correct version} and \eqref{alternate version} is puzzling. For a delightful account of the ``computational virtues of matrix products'' and much more, we recommend \cite{Gosper2}. In Gosper's own words \cite[p. 3]{Gosper2}:
\begin{quote}
    ``It is strange that so
fruitful a representation is applied so rarely to sums. Perhaps discouraging is the apparent
nonlinearity of the matrix product form, (partially) concealing such familiar operations as
termwise differentiation or combination with other series. We shall see that a small bag of
tricks, again based on path invariance, more than remedies these problems.''
\end{quote}
\section{Moments of a Discrete Normal Distribution.}
The literature about discrete normal distributions is scarce and quite recent. 
We relied primarily on \cite{Kemp}
while \cite{Agostini} studies the multivariate case and \cite{Romik} exploits their connection with elliptic functions. The discrete normal distribution is important in probability theory because, as shown in \cite{Kemp}, it is the discrete distribution that maximizes Shannon entropy under variance constraint. The situation is analogous to its continuous counterpart, the normal distribution, which maximizes the differential entropy under variance constraint.
\subsection{The Moment of Order Two.}
The first suspicious identity 
\[
\sum_{n\in\mathbb{Z}}n^{2}\exp\left(-\frac{n^{2}}{2}\right)=\sum_{n\in\mathbb{Z}}\exp\left(-\frac{n^{2}}{2}\right),
\]
is inspired by its continuous counterpart 
\[
\int_{-\infty}^{\infty}x^{2}e^{-\frac{x^{2}}{2}}dx=\int_{-\infty}^{\infty}e^{-\frac{x^{2}}{2}}dx=\sqrt{2\pi}.
\]
Numerical evaluation shows indeed that $ \theta_{3}\left(q\right)$ with $q=e^{-\frac{1}{2}}$ is approximated as
\begin{align*}
\sum_{n\in\mathbb{Z}}\exp\left(-\frac{n^{2}}{2}\right) &
 \simeq\sqrt{2\pi}+1.32\times10^{-8},
\end{align*}
while 
\begin{align*}
\sum_{n\in\mathbb{Z}}n^{2}\exp\left(-\frac{n^{2}}{2}\right) 
 & \simeq\sqrt{2\pi}-5.16\times10^{-7}.
\end{align*}
Here $\theta_3$ is the Jacobi theta function defined by \cite[20.2.3]{NIST} $\theta_{3}\left(q\right)=\sum_{n\in\mathbb{Z}}q^{n^2}$. An exact formula for the ratio of these two sums is given by \cite[Eq. (9)]{Romik}
\begin{equation}
\label{sigma2}
\frac{\sum_{n\in\mathbb{Z}}n^{2}\exp\left(-\frac{n^{2}}{2}\right)}{\sum_{n\in\mathbb{Z}}\exp\left(-\frac{n^{2}}{2}\right)}=\frac{K\left(k\right)^{2}}{\pi}\left[\frac{E\left(k\right)}{K\left(k\right)}-k'^{2}\right]\simeq0.9999997887677.
\end{equation}
Here $k\simeq0.99999997859$ is the elliptic modulus associated to $q=\text{exp}(-\pi \frac{K'(k)}{K(k)})=e^{-1/2}$, $k'=\sqrt{1-k^2}\simeq0.00020689274$ is the complementary modulus, while $K$ and $E$ are the complete elliptic integrals of the first and second kind, respectively.  

Reciprocally, one may ask how close to $\frac{1}{2}$ is the value of the parameter $c=\pi \frac{K'(k)}{K(k)}$ such that
\begin{equation}
\frac{\sum_{n\in\mathbb{Z}}n^{2}\exp\left(- c n^{2}\right)}{\sum_{n\in\mathbb{Z}}\exp\left(- c n^{2}\right)}=
1.
\end{equation}
From \eqref{sigma2}, the corresponding elliptic modulus $k$ 
is the unique solution to the equation 
\[
\frac{K^{2}\left(k\right)}{\pi^{2}}\left(\frac{E\left(k\right)}{K\left(k\right)}-k'^{2}\right)=1,
\]
which can not be solved analytically. However, Mathematica produces the numerical
value $k\simeq0.99999997859.$ 
The value of the parameter $c$ is deduced as $c= \frac{K'(k)}{K(k)}\simeq0.49999989438
$
which is close to $\frac{1}{2}$ indeed.
\subsection{The Moment of Order Four}
The analogy with Gaussian integrals continues with the second identity in Entry \ref{discrete normal}. In the continuous case, 
\[
\frac{\int_{-\infty}^{\infty}x^{4}e^{-\frac{x^{2}}{2}}dx}{\int_{-\infty}^{\infty}e^{-\frac{x^{2}}{2}}dx}=3,
\]
while in the discrete case 
\begin{align*}
\frac{\sum_{n\in\mathbb{Z}}n^{4}\exp\left(-\frac{n^{2}}{2}\right)}{\sum_{n\in\mathbb{Z}}\exp\left(-\frac{n^{2}}{2}\right)}\simeq 3.000000707.
\end{align*}
In fact, we have the exact formula \cite[Thm. 3]{Romik}
\[
\frac{\sum_{n\in\mathbb{Z}}n^{4}\exp\left(-\frac{n^{2}}{2}\right)}{\sum_{n\in\mathbb{Z}}\exp\left(-\frac{n^{2}}{2}\right)}=3\sigma^{4}+\frac{1}{8}\theta_{3}^{8}\left(e^{-\frac{1}{2}}\right)\left(kk'\right)^{2}
\]
with 
\[
\sigma^{2}=\frac{\sum_{n\in\mathbb{Z}}n^{2}\exp\left(-\frac{n^{2}}{2}\right)}{\sum_{n\in\mathbb{Z}}\exp\left(-\frac{n^{2}}{2}\right)}
\]
as computed previously in \eqref{sigma2}.
The correction term is approximately equal to $\frac{1}{8}\theta_{3}^{8}\left(e^{-\frac{1}{2}}\right)\left(kk'\right)^{2}\simeq8.33912\times10^{-6}$.

\subsection{The Case of the Jacobi $\theta_2$ Function.}
An extension of the previous results to the Jacobi $\theta_2$ function $\theta_{2}\left(q\right)=\sum_{n\in\mathbb{Z}}q^{(n-\frac{1}{2})^2}$
is given by
\label{discrete normal2}
\[
\sum_{n\in\mathbb{Z}}\left(n-\frac{1}{2}\right)^{2}\exp\left(-\frac{1}{2}\left(n-\frac{1}{2}\right)^{2}\right)\simeq\sum_{n\in\mathbb{Z}}\exp\left(-\frac{1}{2}\left(n-\frac{1}{2}\right)^{2}\right).
\]
Numerical values are \[\sum_{n\in\mathbb{Z}}\exp\left(-\frac{1}{2}\left(n-\frac{1}{2}\right)^{2}\right)  =\theta_{2}\left(q=e^{-\frac{1}{2}}\right)
 \simeq\sqrt{2\pi}-1.34\times10^{-8}\] and 
\[
\sum_{n\in\mathbb{Z}}\left(n-\frac{1}{2}\right)^{2}\exp\left(-\frac{1}{2}\left(n-\frac{1}{2}\right)^{2}\right) 
  \simeq\sqrt{2\pi}+5.16\times10^{-7}.
\]
Similar to the $\theta_3$ case, an exact formula
is available for the ratio
\[
\frac{\sum_{n\in\mathbb{Z}}\left(n-\frac{1}{2}\right)^{2}q^{\frac{1}{2}\left(n-\frac{1}{2}\right)^{2}}}{\sum_{n\in\mathbb{Z}}q^{\frac{1}{2}\left(n-\frac{1}{2}\right)^{2}}}=\frac{1}{\pi^{2}}E\left(k\right)K\left(k\right)-\frac{1}{4}\theta_{4}^{4}\left(q\right)
\]
with $\theta_4$ the theta function \cite[20.2.4]{NIST}
$\theta_{4}\left(q\right)=\sum_{n\in\mathbb{Z}}\left(-1\right)^{n}q^{n^2}$. 
A numerical evaluation in the case $q=e^{-\frac{1}{2}}$ produces
\[
\frac{\sum_{n\in\mathbb{Z}}\left(n-\frac{1}{2}\right)^{2}\exp\left(-\frac{1}{2}\left(n-\frac{1}{2}\right)^{2}\right)}{\sum_{n\in\mathbb{Z}}\exp\left(-\frac{1}{2}\left(n-\frac{1}{2}\right)^{2}\right)}
\simeq1.000000211232.
\]


\section{A Sum Over Integer Partitions.}\label{sec:partitionsum}

Our next identity involves a sum over integer partitions and a binomial coefficient: 
\begin{align}
\sum_{1j_{1}+\cdots+nj_{n}=n}\frac{n^{j_{1}+\cdots+j_{n}}}{\prod_{k=1}^{n}k^{j_{k}}j_{k}!}=\binom{2n-1}{n},\label{eq:integer-partition-sum}
\end{align}
where the $j_i$ are each non-negative integers. As we will see in a moment, this is a welcome identity for combinatorists and quantum information theorists alike. It transforms the computation of an odd looking sum over the partitions of an integer to a much simpler binomial coefficient computation, so that if this strange sum were to appear anywhere practical, the identity \eqref{eq:integer-partition-sum} would serve us well indeed!

Before explaining \eqref{eq:integer-partition-sum}, let us offer some context. A classical problem in combinatorics is to compute the number of
non-equivalent colorings of $n$ objects with $d$ colors under the
assumption that colorings connected by the action of a group $G$
are considered equivalent. For example, one might consider the colorings
of $n$ beads on a necklace using $d$ colors. The cyclic group of
order $n$ acts on the set of beads in a natural way corresponding
to the rotations of the necklace. A solution to the general problem
is given by the enumeration theorem of the Hungarian mathematician G. P\'olya, who is well-known for his contributions to combinatorics, among other fields. The theorem gives the number of orbits of the action of $G$ on a set of colorings in terms of the cycle index polynomial $Z(G)(x_{1},\ldots,x_{n})=\frac{1}{\lvert G\rvert}\sum_{g\in G}x_{1}^{c_{1}(g)}\cdots x_{n}^{c_{n}(g)}$ \cite{Polya},
where $c_{j}(g)$ denotes the number of cycles of order $j$ in the
disjoint cycle decomposition of $g$.
\begin{theorem}[P\'olya's Enumeration Theorem]
Let $G$ be a finite group (which we can take to be a permutation
group by Cayley's theorem) with letters in a set $X$. Let $C$ be
a set with $\lvert C\rvert=d$ and suppose $\Omega$ is the set of
all functions $f:X\to C$. Then $G$ acts naturally on $\Omega$ and
the number of orbits of $G$ acting on $\Omega$ is given by 
\begin{align}
Z(G)(d,\ldots,d),\label{eq:cycleindex}
\end{align}
where $Z(G)$ denotes the cycle index polynomial of $G$. 
\end{theorem}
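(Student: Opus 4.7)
The plan is to apply Burnside's lemma (a.k.a. the Cauchy--Frobenius lemma), which asserts that for a finite group $H$ acting on a finite set $S$, the number of orbits equals the average number of fixed points:
\[
|S/H| \;=\; \frac{1}{|H|}\sum_{h\in H}\bigl|\{s\in S : h\cdot s = s\}\bigr|.
\]
I would first set up the natural action of $G$ on $\Omega$ by $(g\cdot f)(x)=f(g^{-1}x)$ for $f\in\Omega$ and $g\in G$, so that Burnside produces
\[
|\Omega/G| \;=\; \frac{1}{|G|}\sum_{g\in G}\bigl|\mathrm{Fix}(g)\bigr|,
\]
where $\mathrm{Fix}(g)=\{f\in\Omega : g\cdot f = f\}$. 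The theorem will then follow once $|\mathrm{Fix}(g)|$ is identified with $d^{\,c_1(g)+\cdots+c_n(g)}$.

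The key step is the characterization of $\mathrm{Fix}(g)$. I would argue that $f\in\mathrm{Fix}(g)$ if and only if $f(x)=f(g\cdot x)$ for all $x\in X$, which is in turn equivalent to $f$ being constant on every orbit of the cyclic subgroup $\langle g\rangle$ acting on $X$. These orbits are precisely the cycles appearing in the disjoint cycle decomposition of $g$ as a permutation of $X$, so the total number of orbits is $c_1(g)+c_2(g)+\cdots+c_n(g)$. Since $f$ may take any of the $d=|C|$ values independently on each such cycle, one gets
\[
|\mathrm{Fix}(g)| \;=\; d^{\,c_1(g)+c_2(g)+\cdots+c_n(g)} \;=\; \prod_{j=1}^{n} d^{\,c_j(g)}.
\]

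Substituting this back into Burnside's formula yields
\[
|\Omega/G| \;=\; \frac{1}{|G|}\sum_{g\in G}\prod_{j=1}^{n}d^{\,c_j(g)} \;=\; Z(G)(d,\ldots,d),
\]
which matches the definition of the cycle index polynomial evaluated at $(d,\ldots,d)$, establishing \eqref{eq:cycleindex}.

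The main obstacle is really only the invocation of Burnside's lemma, which is standard and can be cited; the rest is a transparent counting argument where the only care needed is to recognize that invariance under $g$ is precisely constancy on each cycle of the permutation $g$ acts as on $X$. Once this is in hand, the identification with the cycle index is immediate.
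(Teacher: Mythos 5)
Your proof is correct and is the standard argument: Burnside's (Cauchy--Frobenius) lemma plus the observation that a coloring fixed by $g$ is exactly one that is constant on the cycles of $g$, so $\lvert\mathrm{Fix}(g)\rvert = d^{\,c_1(g)+\cdots+c_n(g)}$, and averaging over $G$ gives $Z(G)(d,\ldots,d)$. The paper itself offers no proof of this theorem --- it is quoted as a classical result with a citation to P\'olya --- so there is nothing to compare against; your argument is a complete and correct proof of the (unweighted) enumeration statement as stated.
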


In the context of our colorings, $X$ consists of the objects to be
colored, $C$ consists of the set of colors, and $\Omega$ is then
the set of all colorings of $X$, which the chosen group $G$ then
acts on naturally.
Therefore, the number of nonequivalent colorings
under the action of $G$ is given by the cycle index polynomial as
in \eqref{eq:cycleindex}. It is a standard result that the cycle
index polynomial of the symmetric group $S_{n}$ is given by 
\begin{align*}
\sum_{j_{1}+\cdots+nj_{n}=n}\prod_{k=1}^{n}\frac{x_{k}^{j_{k}}}{k^{j_{k}}j_{k}!}.
\end{align*}
Thus, the number of nonequivalent colorings
of $n$ objects with $d$ colors under the action of the symmetric
group is given by 
\begin{align*}
\sum_{j_{1}+\cdots+nj_{n}=n}\frac{d^{j_{1}+\cdots+j_n}}{\prod_{k=1}^{n}k^{j_{k}}j_{k}!}.
\end{align*}
We will now show that there is a convenient closed form for this strange
sum. 
\begin{proposition}
It holds that 
\begin{align}
\sum_{j_{1}+\cdots+nj_{n}=n}\frac{d^{j_{1}+\cdots+j_n}}{\prod_{k=1}^{n}k^{j_{k}}j_{k}!}=\binom{n+d-1}{n}.\label{eq:cycleindexsum}
\end{align}
\end{proposition}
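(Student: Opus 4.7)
The plan is to recognize the left-hand side as the cycle index polynomial $Z(S_n)$ of the symmetric group, evaluated at $(d,d,\ldots,d)$, and then to extract it as the coefficient of $t^n$ in a suitable generating function. Specifically, I would use the well-known identity
\[
\sum_{n\ge 0} Z(S_n)(x_1,x_2,\ldots)\, t^n \;=\; \exp\!\left(\sum_{k\ge 1}\frac{x_k\, t^k}{k}\right),
\]
which can be proved either by appealing to the exponential formula for species or, more concretely, by expanding the exponential, collecting terms, and observing that $j_k$ is exactly the number of $k$-cycles in the corresponding cycle type of $S_n$. In either case the factor $\prod_k k^{j_k} j_k!$ arises naturally as the size of the centralizer of a permutation of that cycle type in $S_n$.

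Next, I would specialize the generating function by setting $x_k = d$ for every $k \ge 1$. The inner sum telescopes under the Taylor expansion of the logarithm,
\[
\sum_{k\ge 1}\frac{d\, t^k}{k} \;=\; -d\log(1-t),
\]
so the exponential collapses to $(1-t)^{-d}$. By the generalized binomial theorem,
\[
(1-t)^{-d} \;=\; \sum_{n\ge 0}\binom{n+d-1}{n}\, t^n.
\]
Extracting the coefficient of $t^n$ on both sides then gives exactly \eqref{eq:cycleindexsum}.

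The only real obstacle is justifying the exponential generating function identity for the cycle indices of $S_n$. A clean combinatorial proof goes as follows: every permutation $\sigma \in S_n$ has a unique cycle type $(j_1, j_2, \ldots)$ with $\sum_k k j_k = n$, and the number of permutations of that type equals $n!/\prod_k k^{j_k} j_k!$. Thus
\[
Z(S_n)(x_1,x_2,\ldots) \;=\; \frac{1}{n!}\sum_{\sigma \in S_n}\prod_k x_k^{c_k(\sigma)} \;=\; \sum_{\sum k j_k = n}\frac{1}{\prod_k k^{j_k} j_k!}\prod_k x_k^{j_k},
\]
and multiplying by $t^n$ and summing, the convolution structure factorizes as an exponential in the variables $x_k t^k/k$, because the choices for cycles of different lengths are independent.

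Finally, it is worth noting that the identity admits a purely bijective reading: $Z(S_n)(d,\ldots,d)$ counts, by Pólya's theorem, the orbits of $S_n$ on functions from an $n$-set to a $d$-set, i.e.\ multisets of size $n$ from $d$ colors, and this number is $\binom{n+d-1}{n}$ by stars and bars. This gives an independent verification of the generating-function derivation and makes the identity transparent.
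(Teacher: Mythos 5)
Your proposal is correct and follows essentially the same route as the paper: both identify the left-hand side as the coefficient of $t^n$ in $\exp\left(d\sum_{k\ge 1}t^k/k\right)$, collapse the exponent via the logarithm series to $(1-t)^{-d}$, and read off $\binom{n+d-1}{n}$ from the generalized binomial theorem. The closing bijective remark (P\'olya plus stars and bars) is a pleasant independent check but does not change the substance of the argument.
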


\begin{proof}
We first show that $f(t)=e^{d\sum_{k=1}^{\infty}t^{k}/k}$ 
is a generating function for the partition sum in \eqref{eq:cycleindexsum}.
Indeed, we have 
\begin{align*}
f(t) & =e^{d\sum_{k=1}^{\infty}\frac{t^{k}}{k}}=\prod_{k=1}^{\infty}e^{d\frac{t^{k}}{k}}=\prod_{k=1}^{\infty}\sum_{j_{k}=0}^{\infty}\frac{d^{j_{k}}t^{kj_{k}}}{k^{j_{k}}j_{k}!}.
\end{align*}
Then the $n$-th coefficient is given by the sum over all terms where
the exponent $\sum_{k=1}^{\infty}kj_{k}$ of $t$ is equal to $n$, each
of which is given by a partition of $n$. That is, we have that the
$n$-th coefficient is precisely 
\begin{align*}
\sum_{j_{1}+\cdots+nj_{n}=n}\frac{d^{j_{1}+\cdots+j_n}}{\prod_{k=1}^{n}k^{j_{k}}j_{k}!}.
\end{align*}
On the other hand, by recognizing the Taylor expansion of $\log(1-t)$ in the exponents, we find that $f(t)=\frac{1}{(1-t)^{d}}$, 
which has $n$-th coefficient $\binom{d+n-1}{n}$, 
and this completes the proof.
\end{proof}
If $n$ is large, computing all possible
partitions of $n$, as required by the summation, can be cumbersome,
but the closed form is readily evaluated. For example, for $n=100$ and $d=101,$ we have 
\begin{align*}
    \sum_{j_{1}+\cdots+100j_{100}=100}\frac{101^{j_{1}+\cdots+j_{100}}}{\prod_{k=1}^{100}k^{j_{k}}j_{k}!}=\binom{100+101-1}{100},
\end{align*} which is easily evaluated by a computer, producing the exact value $90548514656103281165404177077484163874504589675413336841320$.
Moreover, when $d=n$, we recover \eqref{eq:integer-partition-sum}, and this explains Entry~\ref{coloring}. 

Note that the cycle index polynomial of the alternating group $A_{n}$ is 
\begin{align*}
\sum_{j_{1}+\cdots+nj_{n}=n}\frac{1+(-1)^{j_2+j_4+\cdots}}{\prod_{k=1}^{n}k^{j_{k}}j_{k}!}\prod_{k=1}^{n}x_{k}^{j_{k}},
\end{align*}
where $j_2+j_4+\cdots$ denotes the sum of the indices with even subscripts, and a similar generating function argument, where the generating function is now $f(t)=e^{d\sum_{k=1}^{\infty}t^{k}/k}+e^{-d\sum_{k=1}^{\infty}(-t)^{k}/k}$, produces
\begin{align*}
\sum_{j_{1}+\cdots+nj_{n}=n}
\hspace{-3mm}
\frac{1+(-1)^{j_2+j_4+\cdots}}{\prod_{k=1}^{n}k^{j_{k}}j_{k}!}d^{j_{1}+\cdots+j_n}
\hspace{-1mm}
=
\hspace{-1mm}
\binom{n+d-1}{n}
\hspace{-1mm}
+
\frac{d(d-1)\cdots(d-n+1)}{n!}.
\end{align*}
From the perspective of coloring theory, we see that the difference between the number of colorings with respect to the actions of the alternating and symmetric groups is precisely $d(d-1)\cdots(d-n+1)/n!$. Therefore, there are just as many colorings with respect to the action of the symmetric group as there are with respect to the alternating group whenever the number of objects to be colored is greater than the number of colors; that is, when $n>d$.

Of course, P\'olya's enumeration theorem is much more general than coloring theory, and there are many applications of this result outside of that context. An application to linear algebra, for example, is given by considering the action of a finite permutation group $G$ on a tensor product space $V\otimes\cdots\otimes V$ by permuting the constituent spaces according to the chosen permutation. That is, with $\sigma\in G\subset S_n$ and $\{e_i\}_{i=1}^d$ a basis for $V$, we define $\sigma\cdot (e_{i_1}\otimes\cdots\otimes e_{i_n})= e_{i_{\sigma(1)}}\otimes\cdots\otimes e_{i_{\sigma(n)}}$ and extend linearly. Then $\Pi_G=(1/\lvert G\rvert)\sum_{\sigma\in G}\sigma$ is a projection operator and its image has dimension $Z(G)(d,\ldots,d)$. Thus, the dimensions of the subspaces corresponding to the actions of $S_n$ and $A_n$ are given by the identities we have derived in this section. These subspaces appear in quantum computation and quantum information theory in a class of algorithms that test for entanglement in a pure quantum state \cite{Bradshaw2,LaBorde}.

\section{A Digital High Precision Fraud.}\label{sec:hpfraud}

Entry~\ref{digits} is a high precision fraud along the lines of Sum 4 in \cite{Borwein}. This sum is unusual for two reasons. For one, the summand contains a peculiar combination of digital sequences that only a mathematician brave enough to produce a dictionary of the real numbers \cite{BBDictionary} would think to sum. On the other hand, the (fraudulent) result of the summation is quite nice looking. In fact, we will see that it gets even better. This unconventional almost-identity can be extended in an unexpected way, but first let us explain how this special case comes about.

Let $a(n)$ denote the number of even digits in the
decimal expansion of $n$ (entry A196563 in the OEIS) and let $b(n)$ denote the number of odd
digits (entry A196564 in the OEIS). Define the sequence $c(n):=10^{5}a(n)-b(n)/10^{5}$. Then
we have the fraudulent identity 
\begin{align*}
\sum_{n=0}^{\infty}\frac{c(n)}{10^{n}}
=\frac{11111111111}{110000},
\end{align*}
which is correct up to 105 decimal places. To see this, we will need
the generating functions for both $a(n)$ and $b(n)$. There is a nice derivation of the generating function for $b(n)$ in \cite{Borwein}, and we include it here for completeness. We start with the expression
\begin{align*}
    \sum_{n=0}^\infty t^{b(n)}x^n=\prod_{n=0}^\infty\left(1+tx^{10^n}+x^{2\cdot 10^n}+tx^{3\cdot10^n}+\cdots+x^{8\cdot 10^n}+tx^{9\cdot10^n}\right),
\end{align*}
which is easily checked by comparing the coefficients of $x^m$ on both sides. Indeed, on the right hand side, we pick up a factor of $t$ every time $m$ has an odd digit. Now taking the logarithmic derivative in $t$, we come to the expression
\begin{align*}
    \frac{\sum_{n=0}^\infty b(n)t^{b(n)-1}x^n}{\sum_{n=0}^\infty t^{b(n)}x^n}=\sum_{n=0}^\infty\frac{x^{10^n}+x^{3\cdot 10^n}+x^{5\cdot 10^n}+x^{7\cdot 10^n}+x^{9\cdot 10^n}}{1+tx^{10^n}+x^{2\cdot 10^n}+\cdots+x^{8\cdot 10^n}+tx^{9\cdot10^n}},
\end{align*}
and letting $t=1$ yields
\begin{align} \label{eq:odd-digits-generating-function}
\sum_{n=0}^{\infty}b(n)x^{n}=\frac{1}{1-x}\sum_{n=0}^{\infty}\frac{x^{10^{n}}}{1+x^{10^{n}}}.
\end{align}
To derive the generating function for $a(n)$, we will find the generating function for the number of digits $d(n)$ in the decimal representation of $n$ and note $a(n)=d(n)-b(n)$. We claim that the generating function for $d(n)$ is given by
\begin{align} \label{eq:digits-generating-function}
    \sum_{n=0}^\infty d(n)x^n=1+\frac{1}{1-x}\sum_{n=0}^\infty x^{10^n}.
\end{align}
To see this, we must check that the coefficients of $x^m$ agree for all $m$. This is clearly true for $m=0$ since $d(0)=1$. Letting $m>0$, we have
\begin{align*}
    \frac{1}{m!}\frac{d^m}{dx^m}\bigg\lvert_{x=0}
    \hspace{-1mm}
    \left(\hspace{-1mm}1+\hspace{-1mm}\sum_{n=0}^\infty \frac{x^{10^n}}{1-x}\hspace{-1mm}\right)&
    =
    \hspace{-1mm}
    \bigg[\frac{1}{m!}\sum_{k=0}^m\binom{m}{k}\frac{d^{m-k}}{dx^{m-k}}\frac{1}{1-x}\frac{d^k}{dx^k}\sum_{n=0}^\infty x^{10^n}\bigg]_{x=0}\\
    &=\bigg[\sum_{k=0}^m\frac{1}{k!}\frac{1}{(1-x)^{m-k+1}}\frac{d^k}{dx^k}\sum_{n=0}^\infty x^{10^n}\bigg]_{x=0}.
\end{align*}
In the remaining derivative, we see that evaluating at $x=0$ will give zero unless $k$ is precisely a power of 10, in which case, the derivative yields $(10^r)!$ for some $r$. Thus, we need only sum over the powers of ten less than or equal to $m$, giving us
\begin{align*}
    \frac{1}{m!}\frac{d^m}{dx^m}\bigg\lvert_{x=0}\left(1+\frac{1}{1-x}\sum_{n=0}^\infty x^{10^n}\right)&=d(m),
\end{align*}
and so \eqref{eq:digits-generating-function} holds. It follows that the generating function for $a(n)$ is given by $\sum_{n=0}^{\infty}a(n)x^{n}=1+\frac{1}{1-x}\sum_{n=0}^{\infty}\frac{x^{2\cdot10^{n}}}{1+x^{10^{n}}}$, and so the generating function for $c(n)$ is
\begin{align}
\sum_{n=0}^{\infty}c(n)x^{n}=10^{5}+\frac{1}{1-x}\sum_{n=0}^{\infty}\frac{10^{5}x^{2\cdot10^{n}}-x^{10^{n}}/10^{5}}{1+x^{10^{n}}}.\label{eq:generating-function-c}
\end{align}
Evaluating \eqref{eq:generating-function-c} at $x=1/10$, we notice that the second term $(n=1)$ in the right sum vanishes
and the following terms $(n>1)$ become increasingly small. In particular,
we have $\sum_{n=0}^{\infty}\frac{c(n)}{10^{n}}=\frac{11111111111}{110000}-\varepsilon$,
where $\varepsilon<10^{-105}$.

Notice that the choice of $10^5$ in $c(n)$ is not necessary. Indeed, we can generalize the above result by defining $c_k(n):=k^5a(n)-b(n)/k^5$ with $k>1$. Then
\begin{align*}
\sum_{n=0}^{\infty}c_k(n)x^{n}=k^5+\frac{1}{1-x}\sum_{n=0}^{\infty}\frac{k^5x^{2\cdot10^{n}}-x^{10^{n}}/k^5}{1+x^{10^{n}}}
\end{align*}
and evaluating at $x=1/k$ produces
\begin{align}\label{eq:plugged-in}
    \sum_{n=0}^\infty\frac{c_k(n)}{k^n}=k^5+\frac{k}{k-1}\sum_{n=0}^\infty\frac{k^{5-2\cdot10^n}-k^{-5-10^n}}{1+k^{-10^n}},
\end{align}
so that the second term in the right sum vanishes and the remaining terms again become increasingly small. The sum can therefore be approximated by $k^5+\frac{k^4-k^{-5}}{k-\frac1k}$. For example, with $k=100$, we have
\begin{align*}
    \sum_{n=0}^\infty\frac{c_{100}(n)}{100^n}\simeq\frac{101010101010101010101}{10100000000}
\end{align*}
with an error on the order of $10^{-210}$. In fact, this nice pattern involving the powers of ten continues with the number of 0's between 1's increasing by one at each step. That is, the sum $\sum_{n=0}^\infty \frac{c_{10^p}(n)}{10^{pn}}$ is approximated by
\begin{align*}
    \frac{10
    \cdots010\cdots010\cdots010\cdots010\cdots010\cdots010\cdots010\cdots010\cdots010\cdots01}{10\cdots010\cdots0},
\end{align*}
with $p-1$ zeros between each consecutive pair of the eleven 1's in the numerator, as well as the two in the denominator, and $4p$ zeros at the end of the denominator. Written more concisely, we have $\sum_{n=0}^\infty\frac{c_{10^p}(n)}{10^{pn}}\simeq\frac{[1[0]_{p-1}]_{10}1}{1[0]_{p-1}1[0]_{4p}}$, where $[x]_k:=x\cdots x$ denotes $k$ copies of the symbol $x$. The error in this approximation is on the order of $10^{-105p}$. To see this, note that the $n=2$ term in \eqref{eq:plugged-in} with $k=10^p$ is $\frac{10^p(10^{-195p}-10^{-105p})}{(1+10^{-100p})(10^p-1)}$, which is indeed on the order of $10^{-105p}$, and the remaining terms are much smaller.

The approximations exhibited in this section fall under the category of 'digital sums,' which are sums of sequences involving the digits of an integer in some base. The method we employed to
construct these high precision frauds involved combining two generating functions which behave similarly in such a way that the magnitude of the resulting sum is mostly concentrated in its first term. We suspect that there are many more examples following this line of reasoning, and we leave it to the reader to come up with one of their own.

\section{Conclusion.} 
We have posed several puzzling identities, some of which are high precision frauds, and we hope that the reader will feel inspired to add to this zoo of dubious identities. 
Our main goal here was to extend the domain of the identities derived in \cite{Borwein} to other mathematical areas such as combinatorics or elliptic functions.

We will conclude with a few remarks about some of the identities we have discussed here, including possible future directions. Entry \ref{coloring} is related to the theory of colorings modulo the action of the alternating and symmetric groups. We wonder if considering the action of a different finite group will lead to a similar result. Both identities in Entry \ref{Infinite matrix} should be extended to other zeta functions. The reference \cite{Wakhare 3} contains extensions of such identities to the alternating zeta function
$\tilde{\zeta}(2)$ and $\tilde{\zeta}(4)$ with $\tilde{\zeta}(s)=\sum_{n\ge1}\frac{(-1)^n}{n^s}$ and to finite versions of the zeta function, i.e.  hyper-harmonic numbers of order $r.$ A generalization of these results to the multivariate version of the zeta function $
\zeta\left(p,q\right)=\sum_{k_1>k_2\ge1}\frac{1}{k_1^{p}k_2^{q}},
$ known as multiple zeta values, would constitute an interesting research project. 

Finally, let us finish by circling back to the 
fascinating world of the Borweins. In \cite{BorweinSinc}, a sequence of integrals involving the sinc function 
is shown to exhibit a remarkable consistency pattern which suddenly breaks down with a high precision fraud. These integrals have come to be known as Borwein integrals, and an extension of this result seems an appropriate place to end.
\begin{proposition}
Assume that $\{a_k\}_{0\le k\le n}$ are positive numbers  and that each function $\varphi_{k}$ is the Fourier transform
of an even probability density function (a positive function that integrates to 1) $\psi_{k}$ with bounded support $\left[-\frac{a_k}{2},\frac{a_k}{2}\right]$.\\ 
If $a_0>a_1+\dots +a_n$ then
\[
\int_{0}^{\infty}
\textnormal{sinc}\left(a_{0}z\right) \prod_{k=1}^{n}\varphi_{k}\left(a_{k}z\right)dz
=\frac{\pi}{2a_0},
\]
whereas if $a_0<a_1+\dots +a_n$,
\[
\int_{0}^{\infty}
\textnormal{sinc}\left(a_{0}z\right) \prod_{k=1}^{n}\varphi_{k}\left(a_{k}z\right)dz
<\frac{\pi}{2a_0}.
\]
\end{proposition}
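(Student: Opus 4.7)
The plan is to mimic the classical Fourier-transform proof of Borwein's sinc integrals. First, I would use evenness: each $\varphi_k$ is even because it is the Fourier transform of an even density $\psi_k$, and $\text{sinc}$ is also even, so
\[
\int_0^\infty \text{sinc}(a_0 z)\prod_{k=1}^n\varphi_k(a_k z)\,dz=\frac{1}{2}\int_{-\infty}^\infty \text{sinc}(a_0 z)\prod_{k=1}^n\varphi_k(a_k z)\,dz.
\]

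The heart of the argument is to identify the integrand as a product of Fourier transforms. Up to a constant, $\text{sinc}(a_0 z)$ is the Fourier transform of the uniform probability density $\chi_0$ supported on $[-a_0/2,a_0/2]$, while for $k\geq 1$ the factor $\varphi_k(a_k z)$ is (after the substitution $y=a_k x$) the Fourier transform of a rescaled even probability density $\tilde\psi_k$ supported on $[-a_k/2,a_k/2]$. The convolution theorem then yields
\[
\prod_{k=1}^n\varphi_k(a_k z)=\widehat{g}(z),\qquad g:=\tilde\psi_1*\tilde\psi_2*\cdots*\tilde\psi_n,
\]
where $g$ is an even probability density and, by the Minkowski-sum rule for supports of convolutions, the support of $g$ is the interval $\left[-\tfrac{1}{2}\sum_{k=1}^n a_k,\,\tfrac{1}{2}\sum_{k=1}^n a_k\right]$.

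Next, I would apply Plancherel's theorem to convert the frequency-space integral into a physical-space integral. After careful bookkeeping of the Fourier normalization, one obtains
\[
\int_{-\infty}^\infty \text{sinc}(a_0 z)\,\widehat{g}(z)\,dz=\frac{\pi}{a_0}\int_{-a_0/2}^{a_0/2}g(x)\,dx.
\]
If $a_0>\sum_{k=1}^n a_k$, the support of $g$ is strictly contained in $[-a_0/2,a_0/2]$, so the inner integral equals $\int g=1$ (as $g$ is a probability density), and we recover the claimed value $\pi/(2a_0)$.

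The subtle case is $a_0<\sum_{k=1}^n a_k$. Mere support containment gives only $\int_{-a_0/2}^{a_0/2}g\leq 1$; to obtain a strict inequality one must show that $g$ places strictly positive mass outside $[-a_0/2,a_0/2]$. I would invoke the hypothesis that each $\tilde\psi_k$ has topological support equal to the full interval $[-a_k/2,a_k/2]$, and then argue by induction on $n$ that $g=\tilde\psi_1*\cdots*\tilde\psi_n$ has topological support equal to the entire Minkowski sum, so that every nonempty open subinterval of it is charged by $g$. When $a_0<\sum a_k$, some open subinterval of $(a_0/2,\,\sum a_k/2]$ lies in the support of $g$, forcing $\int_{-a_0/2}^{a_0/2}g<1$ and hence the strict inequality $<\pi/(2a_0)$. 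This last step, upgrading support containment to a strict mass inequality, is where I expect the main technical care will be required.
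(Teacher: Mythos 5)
Your proposal follows essentially the same route as the paper's own (sketched) proof: the Plancherel--Parseval identity converts the integral into the integral of the indicator of $\left[-\frac{a_0}{2},\frac{a_0}{2}\right]$ against the convolution $\tilde\psi_1*\cdots*\tilde\psi_n$, an even probability density supported on $\left[-\frac{1}{2}\sum_{k=1}^n a_k,\frac{1}{2}\sum_{k=1}^n a_k\right]$, and the two cases follow according to whether this support is contained in $\left[-\frac{a_0}{2},\frac{a_0}{2}\right]$. Your additional care about the strictness of the inequality when $a_0<a_1+\cdots+a_n$ (using the positivity of each $\psi_k$ on its support) correctly fills in a point that the paper's sketch passes over silently.
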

\sloppy Here is a sketch of the proof: 
by the Plancherel Parseval identity, 
this is the integral of the product of the indicator function on the interval 
$\left[-\frac{a_{0}}{2},\frac{a_{0}}{2}\right]$  
with the multiple convolution of the functions $\frac{1}{a_k}\psi\left(\frac{z}{a_k}\right) $, which is a probability density function (pdf) with support 
$\left[-\frac{1}{2}\sum_{k=1}^{n}a_{k},\frac{1}{2}\sum_{k=1}^{n}a_{k}\right].$
In the case $a_0>a_1+\cdots+a_n$, we integrate this pdf over its full support, producing 1. In the case $a_0<a_1+\cdots+a_n$, the pdf is integrated over part of its support, producing a positive number less than $1.$

An example is given by the normalized Bessel function of the first kind, 
\[\varphi_{k}\left(z\right)=j_{\nu_{k}}\left(z\right)=2^{\nu_{k}}\Gamma\left(\nu_{k}+1\right)\frac{J_{\nu_{k}}\left(z\right)}{z^{\nu_{k}}}
\]
with $\nu_{k}\ge-1/2,$
which is the Fourier transform of the symmetric beta pdf 
\[\psi_{k}\left(z\right)=\frac{\Gamma\left(\nu_{k}+1\right)}{\sqrt{\pi}\Gamma\left(\nu_{k}+\frac{1}{2}\right)}\left(1-z^{2}\right)^{\nu_{k}-\frac{1}{2}}\mathrm{1}_{\left[-1,1\right]}\left(z\right),\]
producing, in the case $a_0=1$ and $\nu _k=0,\,1\le k \le n,$ the 
numerical example
\[
\int_{0}^{\infty}\text{sinc}\left(z\right)J_{0}\left(\frac{z}{3}\right)dz = \frac{\pi}{2}
\]
\[
\int_{0}^{\infty}\text{sinc}\left(z\right)J_{0}\left(\frac{z}{3}\right)J_{0}\left(\frac{z}{5}\right)dz = \frac{\pi}{2}
\]
\[
\vdots 
\]
\[
\int_{0}^{\infty}\text{sinc}\left(z\right)J_{0}\left(\frac{z}{3}\right)J_{0}\left(\frac{z}{5}\right)J_{0}\left(\frac{z}{7}\right)J_{0}\left(\frac{z}{9}\right)J_{0}\left(\frac{z}{11}\right)J_{0}\left(\frac{z}{13}\right)dz = \frac{\pi}{2}
\]
\[
\int_{0}^{\infty}\text{sinc}\left(z\right)J_{0}\left(\frac{z}{3}\right)J_{0}\left(\frac{z}{5}\right)
\cdots 
J_{0}\left(\frac{z}{13}\right)J_{0}\left(\frac{z}{15}\right)dz \simeq \left(1-6.267\times 10^{-7}\right)\frac{\pi}{2}.
\]
The previous integrals are notoriously difficult to evaluate numerically due to their oscillatory nature; they are, with the exception of the last one, a special case of the Weber-Schafheitlin
type integrals in the 1979 article \cite[Equation (3.2)]{Srivastava} by Exton
and Strivastava.

Since their discovery, Borwein integrals have been interpreted and extended in multiple ways, including by D.H. and J.M. Borwein themselves, see for example \cite{BorweinMares},
where these integrals are related to volumes of polyhedra. In fact, G. P\'olya makes a reappearance in this context in his 1912 doctoral thesis \cite{PolyaThesis}. See also \cite{BorweinStraub} for further results expressing these integrals as series.

\section*{Acknowledgments}
The authors wish to thank Margarite L. LaBorde and Tanay V. Wakhare for their interesting comments on an early version of this work, as well as Chance Sanford for sending a rare reference related to Gosper's work on infinite products of matrices. The first author acknowledges support from the Department of Defense SMART scholarship program. The second author thanks Tucker and Nivens for their unconditional support.


%


\begin{thebibliography}{10}

\bibitem{Agostini}Agostini, D., Amendola, C. (2019). Discrete Gaussian
Distributions via Theta Functions. {\it SIAM J. Appl. Algebra Geom.}
3(1): 1--30.

\bibitem{BorweinZeta}Bailey, D.H., Borwein, J.M., Bradley, D.M. (2006). Experimental Determination of Apéry-like Identities for $\zeta(2n+2)$. {\it Exp. Math.} 15(3):281--289.

\bibitem{BBDictionary}Borwein, J.M., Borwein, P.B. (1990). A Dictionary of Real Numbers. Boston, MA: Springer. 

\bibitem{BorweinSinc} Borwein, D.H., Borwein, J.M. (2001). Some remarkable properties of sinc and related integrals, {\it Ramanujan J.} 5, 73--90.

\bibitem{Borwein}Borwein, J.M., Borwein, P.B. (1992). Strange Series and High Precision Fraud. {\it Amer. Math. Monthly}. 
99(7):622--640.

\bibitem{BorweinMares}Borwein, D.H., Borwein, J.M., Mares Jr, B.A. (2002).  Multi-variable sinc integrals and volume of polyhedra. {\it Ramanujan J.} 6:189--208.

\bibitem{BorweinStraub}
Borwein, D.H., Borwein, J.M., Straub, A. (2012). A Sinc that Sank, {\it Amer. Math. Monthly}.  119(7):535--549. 



\bibitem{Bradshaw2} Bradshaw, Z.P., LaBorde, M.L., Wilde, M.M. (2023). Cycle index polynomials and
generalized quantum separability tests. {\it Proc. R. Soc. A} 479:20220733.


\bibitem{Finch}Finch, S.R.  (2003). {\it Mathematical Constants}. Encyclopedia of Mathematics and its Applications 94. Cambridge University Press.

\bibitem{Gosper}Gosper, R.W. (1976). Analytic identities from path invariant
matrix multiplication.  Unpublished manuscript.

\bibitem{Gosper2}Gosper, R.W. (1990).
Strip Mining in the Abandoned Orefields of Nineteenth Century Mathematics, available online, https://www.tweedledum.com/rwg/Gosper\, 1990\,Strip\,Mining.pdf





\bibitem{Kemp}Kemp, A.W. (1997).
Characterizations of a discrete normal distribution,
{\it J. Stat. Plan. Inference.} 63(2):223--229,


\bibitem{Kondratieva} Kondratieva, M.,  Sadov,  S. (2005). Markov's transformation of series and the WZ method. {\it Adv. in Appl. Math.} 34(2): 393--407.

\bibitem{LaBorde} LaBorde, M.L., Rethinasamy, S., Wilde, M.M. (2022). Testing symmetry on quantum computers. arxiv:2105.12758.

\bibitem{NIST}
Olver, F.W.J., Lozier,  D.W.,  Boisvert, R.F., Clark, C.W. (2010). {\it The {NIST} Handbook of Mathematical Functions}. 
Cambridge Univ. Press.

\bibitem{PolyaThesis} P\'olya, G. (1912). A val\'oszinus\'egsz\'am\'it\'as n\'eh\'any k\'erd\'es\'erol \'es bizonyos vel\H{uk} \H{o}sszef\H{u}ggo hat\'arozott integr\'alokr\'ol (On a few questions in probability theory and some definite integrals related to them), Ph.D. Thesis, E\H{o}tv\H{o}s Lor\'and University, Budapest, Hungary.

\bibitem{Polya} P\'olya, G. (1937). Kombinatorische Anzahlbestimmungen f\"ur Gruppen, Graphen und ChemischeVerbindungen. {\it Acta Math.} 68:145--254.

\bibitem{Prudnikov}Prudnikov, A.P., Brychkov, Yu. A., Marichev, O.I. (1990). {\it Integrals and Series}. Volume I.
Taylor and Francis.


\bibitem{RomikTheta}Romik, D. (2020).  The Taylor coefficients of the Jacobi theta constant $\theta_3$. {\it Ramanujan J.} 52:275--290


\bibitem{Srivastava}Srivastava, H.M., Exton, H. (1979). A generalization
of the Weber-Schafheitlin integral, 
{\it J. f\"{u}r die Reine und Angew. Math.}
309, 1--6.






\bibitem{Wakhare 3}Wakhare, T., Vignat, C. (2023). Infinite matrix products and hypergeometric zeta series. arXiv:2301.00298.

\bibitem{Romik}Wakhare, T., Vignat, C. (2020). Taylor coefficients of the Jacobi $\theta_3(q)$ function. {\it J. Number Theory}. 216:280--306. 


\end{thebibliography}
\end{document}